\titleformat{\section}[block]{\centering\Large}{\thesection}{1em}{}
\theoremstyle{definition}
\newtheorem*{defi}{Definition}
\theoremstyle{remark}
\newtheorem*{rem}{Remark}
\theoremstyle{plain}
\newtheorem{conj}{Conjecture}[section]
\newtheorem{prob}{Problem}[section]
\newtheorem{thm}{Theorem}[section]
\newtheorem{prop}[thm]{Proposition}
\newtheorem{lem}[thm]{Lemma}
\newtheorem{cor}[thm]{Corollary}
\title{A Table Theorem for Surfaces with Odd Euler Characteristic}
\author{Ali Naseri Sadr }
\date{}
\begin{document}

\maketitle
\begin{abstract}
    We use the square peg problem for smooth curves to prove a generalized table Theorem for real valued functions on Riemannian surfaces with odd Euler characteristic. We then use this result to prove the table conjecture for even functions on the two sphere. 
\end{abstract}
\section{Introduction}
In $1951$, Freeman Dyson proved a remarkable result: for every continuous real-valued function on the unit sphere in $\mathbb{R}^3$, it is possible to find the vertices of a square on a great circle (diameter $d=2$) at which the function takes the same value; see \cite{Dyson} for more details. Dyson conjectures in the paper that the same result holds for some circle with diameter $d$, for every $0<d\leq 2$. This conjecture reappears as The Table Problem in \cite[conjecture $16$]{MR3184501}. Roger Fenn proved an analogous result for positive functions defined on a convex disk in the plane; see \cite{Fenn} for more details.

The table problem admits the following natural generalization. Define a square with diameter $d>0$ on a Riemannian surface $(\Sigma,g)$ to be the image, under the exponential map $\exp_g\colon T\Sigma\to \Sigma$, of the vertices of a square of diameter $d$ (with respect to $g$) centered at the origin in some tangent plane. A table for a continuous function $f$ is a square such that $f$ takes the same value at the four vertices of this square.
We establish the following results:
\begin{thm}[\textbf{Table Theorem for Surfaces with Odd Euler Characteristic}]
 \label{Main Thm}
     Let $(\Sigma,g)$ be a Riemannian surface with $\chi(\Sigma)$ odd and $f$ a continuous real valued function on it. Then for every $d>0$, $f$ admits a table with diameter $d$.
 \end{thm}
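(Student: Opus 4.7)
The plan is to recast ``table of diameter $d$'' as a zero of a natural section on a $3$-manifold. I parametrize squares of diameter $d$ on $\Sigma$ by a fiber bundle $E_d \to \Sigma$: over each $p$ the fiber is $SO(2)/(\mathbb{Z}/4) \cong S^1$ (a frame modulo the rotational symmetry of a square), so $E_d$ is a closed $3$-manifold. The evaluation map $\Phi\colon E_d \to \mathbb{R}^4$ sends a square to the $4$-tuple of $f$-values at its vertices, and tables correspond exactly to $\Phi^{-1}(\Delta)$ where $\Delta\subset\mathbb{R}^4$ is the diagonal.

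Next I would decompose $\mathbb{R}^4 = \Delta \oplus \mathbb{R}^2_{\mathrm{rot}} \oplus \mathbb{R}_{\mathrm{sgn}}$ under the cyclic $\mathbb{Z}/4$-action permuting coordinates. A table is then the simultaneous vanishing of the rotation component $\Phi_{\mathrm{rot}} = (f(q_1)-f(q_3),\, f(q_2)-f(q_4))$ (``opposite vertices agree'') and the sign component $\Phi_{\mathrm{sgn}} = f(q_1)-f(q_2)+f(q_3)-f(q_4)$ (``the two diagonals' common values coincide''), viewed as sections of natural bundles over $\Sigma$ — the first a rank-$2$ bundle associated to the rotation representation, the second a line bundle associated to the sign representation.

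The square peg theorem for smooth curves enters through the local picture on each tangent plane: level curves of $f\circ\exp_p$ in $T_p\Sigma$ are smooth Jordan curves for generic levels, and the smooth square peg theorem supplies an odd number of inscribed squares in each. Sweeping the level $c$ continuously out from a local extremum of $f$ allows the inscribed-square diameter to vary continuously from $0$, producing by intermediate-value considerations tables of every sufficiently small diameter near that extremum. The real work lies in upgrading this to \emph{every} $d>0$ globally on $\Sigma$.

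This is where the oddness of $\chi(\Sigma)$ intervenes. Since closed oriented surfaces have even Euler characteristic, $\chi(\Sigma)$ odd forces $\Sigma$ to be non-orientable, hence $w_2(T\Sigma)\neq 0$. Consequently the rank-$2$ bundle associated to the rotation representation admits no nowhere-vanishing section, forcing the locus $\{\Phi_{\mathrm{rot}}=0\}\subset E_d$ to carry a nontrivial mod-$2$ homology class; combined with the odd-count feature of the smooth square peg theorem, this forces $\Phi_{\mathrm{sgn}}$ to vanish somewhere on that locus, producing the table. The main obstacle I foresee is making this last mod-$2$ intersection count precise on the non-orientable base — i.e.\ converting the single topological input ``$\chi(\Sigma)\equiv 1\pmod 2$'' into the geometric conclusion that the rotation and sign zero loci in $E_d$ meet for every prescribed $d>0$.
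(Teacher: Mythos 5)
Your proposal takes a genuinely different route from the paper, and while the general framework is plausible, the final step has real gaps that would need to be filled.

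\textbf{How your route differs.} You work directly on the $3$-manifold $E_d$ of all squares of diameter $d$, decompose the $\mathbb{Z}/4$-representation $\mathbb{R}^4 = \Delta \oplus V_{\mathrm{rot}} \oplus L_{\mathrm{sgn}}$, and propose to count the common zeros of $\Phi_{\mathrm{rot}}$ and $\Phi_{\mathrm{sgn}}$ as a characteristic number. The paper instead scales the circle of radius $a = d/2$ in each $T_x\Sigma$ by $h = f\circ\exp$ to obtain a star-shaped curve, parametrizes the graceful inscribed squares of those curves as a codimension-$4$ Banach submanifold $\mathcal{S}_0\subset C^2_+(U_a\Sigma)\times K_a(\Sigma)$, shows via Sard--Smale that a generic $h$ yields a compact surface cycle $\Sigma_h$ in $\mathrm{sym}^4(T\Sigma)$, and pairs its image under the center map with the zero section of $T\Sigma$. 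The paper's use of the square-peg theorem is structural, not motivational: the odd count of graceful squares in a generic star-shaped curve is exactly what forces $[\Sigma_h]$ to be the nontrivial class in $H_2(T\Sigma;\mathbb{Z}_2)$, after which the $\chi(\Sigma)\bmod 2$ pairing is the second Stiefel--Whitney number of $\Sigma$.

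\textbf{Where your argument does not close.} Your last paragraph contains the crux, and it does not cohere. First, $V_{\mathrm{rot}}$ and $L_{\mathrm{sgn}}$ are bundles over $E_d$, not over $\Sigma$; the fact that $w_2(T\Sigma)\neq 0$ does not by itself tell you that $w_2(V_{\mathrm{rot}})\in H^2(E_d;\mathbb{Z}_2)$ is nonzero. (Incidentally, ``non-orientable $\Rightarrow w_2\neq 0$'' is false --- the Klein bottle has $w_2=0$; what you need, and what is true, is that $\chi$ odd implies $w_2(T\Sigma)\neq 0$.) Second, and more seriously, the sentence ``combined with the odd-count feature of the smooth square peg theorem, this forces $\Phi_{\mathrm{sgn}}$ to vanish on that locus'' is not an argument: the square-peg theorem applied to level curves of $f\circ\exp_p$ produces inscribed squares of uncontrolled diameter and says nothing about the restriction of the section $\Phi_{\mathrm{sgn}}$ to the curve $\{\Phi_{\mathrm{rot}}=0\}\subset E_d$. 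What your approach actually needs is the computation
\begin{equation*}
\bigl\langle\, w_2(V_{\mathrm{rot}})\smile w_1(L_{\mathrm{sgn}}),\,[E_d]_{\mathbb{Z}_2}\,\bigr\rangle \ \equiv\ \chi(\Sigma)\pmod 2,
\end{equation*}
together with a transversality statement guaranteeing that for generic $f$ the zero count equals this characteristic number. Neither is present in your sketch. You should also be careful that the full symmetry group of a square is dihedral, so on a non-orientable $\Sigma$ the fiber of $E_d$ is $O(2)/D_4$ rather than $SO(2)/(\mathbb{Z}/4)$; this changes which associated bundles appear and must be tracked in the cohomology computation. In short: your framing is a legitimate alternative, but the global mod-$2$ count that would make it a proof is precisely what is missing, while in the paper that count is supplied by the square-peg theorem for star-shaped curves.
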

 As an immediate corollary of this Theorem, we will get the result for even functions on $S^2$ and any Riemannian metric that is invariant under the antipodal map. In particular, this resolves the table problem for even functions.
 \begin{cor}
 \label{Main Cor}
     Let $g$ be a Riemannian metric on $S^2$ so that the antipodal map is an isometry of this metric and $f$ an even function on $S^2$. Then for every positive $d$, $f$ admits a table with diameter $d$.
 \end{cor}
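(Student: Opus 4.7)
The plan is to descend to the quotient $\mathbb{RP}^2 = S^2/\{\pm 1\}$ and apply Theorem~\ref{Main Thm}. Let $\pi \colon S^2 \to \mathbb{RP}^2$ denote the quotient map by the antipodal involution $a$. Since $a$ is by hypothesis an isometry of $(S^2, g)$, the metric $g$ descends to a unique Riemannian metric $\bar g$ on $\mathbb{RP}^2$ with the property that $\pi$ is a local isometry; and since $f$ is even, i.e.\ $f \circ a = f$, it descends to a continuous function $\bar f \colon \mathbb{RP}^2 \to \mathbb{R}$ with $f = \bar f \circ \pi$. Because $\chi(\mathbb{RP}^2) = 1$ is odd, Theorem~\ref{Main Thm} applies to $(\mathbb{RP}^2, \bar g)$ and produces, for each $d > 0$, a table of diameter $d$ for $\bar f$.

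The remaining task is to lift such a table to $S^2$. By definition, a table for $\bar f$ of diameter $d$ consists of a point $\bar p \in \mathbb{RP}^2$, a Euclidean square $\{\bar v_1, \bar v_2, \bar v_3, \bar v_4\} \subset T_{\bar p} \mathbb{RP}^2$ of diameter $d$ centered at the origin, and the four vertices $\bar q_i = \exp_{\bar g}(\bar v_i)$, at which $\bar f$ takes a common value. Choose any $p \in \pi^{-1}(\bar p)$. Since $\pi$ is a local isometry, $d\pi_p \colon T_p S^2 \to T_{\bar p} \mathbb{RP}^2$ is a linear isometry, so pulling back yields a square $\{v_1, v_2, v_3, v_4\} \subset T_p S^2$ of diameter $d$ centered at the origin. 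Naturality of the exponential map under local isometries gives $\pi\bigl(\exp_g(v_i)\bigr) = \exp_{\bar g}(\bar v_i) = \bar q_i$, so setting $q_i := \exp_g(v_i)$ we obtain a square of diameter $d$ on $S^2$; and $f(q_i) = \bar f(\pi(q_i)) = \bar f(\bar q_i)$ is independent of $i$, as required.

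There is essentially no obstacle: the only content is the standard fact that an isometric free action on a Riemannian manifold produces a quotient metric making the projection a local isometry, and that the exponential map intertwines the two. The definition of a table chosen in the paper—namely, a Euclidean square in a tangent plane together with its exponential image—is precisely what makes this lifting work verbatim, since the notion of diameter used refers to the flat metric on the tangent plane, which is preserved by $d\pi$.
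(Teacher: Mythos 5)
Your proof is correct and matches the paper's argument exactly: descend the metric and function to $\mathbb{RP}^2$, invoke Theorem~\ref{Main Thm} using $\chi(\mathbb{RP}^2)=1$, and lift the resulting table back to $S^2$ via the local isometry $\pi$. The paper states this in one line; you have simply spelled out the naturality of the exponential map under local isometries, which is the only substantive (and standard) point.
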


The proof of Theorem \ref{Main Thm} is based on the following topological idea. By compactness, it suffices to prove the Theorem for positive $C^2$ functions. Now for a fixed diameter $d$ and a positive $C^2$ function $f$, we deform all the circles with diameter $d$ in $T\Sigma$ by mapping each vector $(x,v)$ in $T\Sigma$ to $(x, f(exp(x,v))\cdot v)$. The result is a subbundle of $T\Sigma$ where each fiber is a star-shaped $C^2$ Jordan curve. Let $C$ be the subspace of $T\Sigma$ consisting of the center points of the inscribed squares in these curves. Every table for $f$ with diameter $d$ is in a one-to-one correspondence with the intersection points of $C$ and the zero section of $T\Sigma$. Within $C$ is the subspace $C_0$ of center points of gracefully inscribed squares: these are the squares whose vertices appear in the same cyclic order around the square and the curve; see \cite{Schwarz} for more details. We use Sard-Smale Theorem for Fredholm maps to prove for a generic choice of $f,$ the subspace $C_0$ gives us a $\mathbb{Z}_2$-cycle representing the non-zero element in $H_2(T\Sigma;\mathbb{Z}_2)$ since every generic $C^2$ star-shaped curve inscribes an odd number of graceful squares; this is proved for generic PL curves in \cite{Schwarz}. Hence, $C_0$ intersects the zero section in at least one point when $\chi(\Sigma)$ is odd, which yields Theorem \ref{Main Thm} by a convergence argument.

In section $2$, we show how our definition of tables for a function on a surface is a generalization of the corresponding one on the two sphere and propose a table problem for Riemannian surfaces. We define the mappings and spaces we need for our proof in section $3$. In section $4$, we establish the technical aspects of our proof, while a formal proof for Theorem \ref{Main Thm} and corollary \ref{Main Cor} is given in section $5$. Related transversality arguments appear in \cite{SHN, Matschke2011EquivariantTM, MR4477422}. Finally, we reprove the square-peg problem for $C^2$ star-shaped Jordan curves by similar transversality arguments to sections $3$ and $4$ in the appendix. 

\section*{Acknowledgments}
 The author is grateful to his advisors, John Baldwin and Josh Greene, for their invaluable guidance, support, and insightful conversations about this work.
He would also like to express his gratitude to Peter Feller and Joaquin Lema for inspiring conversations about this work.

\section{Tables on Riemannian Surfaces}
In the following, we will view $S^2$ as the set of points with distance $1$ from the origin in $\mathbb{R}^3$ and consider the induced Riemannian metric on it.
\begin{defi}
    For a continuous real-valued function $f$ on the round two sphere, we say $p_1,p_2,p_3,p_4$ on $S^2$ are the basis of a table for $f$ if they are four vertices of a square in $\mathbb{R}^3$ and we have
    \begin{equation*}
        f(p_1) = f(p_2) = f(p_3) = f(p_4).
    \end{equation*}
\end{defi}

 For every four vertices of a square $p_1,p_2,p_3,p_4$ on the two sphere, we can find a point $x$ and  vectors $v$ and $w$ in $T_xS^2$ such that
 \begin{align*}
 \label{Sq-Ch}
     & \exp_x(v) = p_1, \hspace{1mm}\exp_x(w) = p_2, \\
     & \exp_x(-v) = p_3,\hspace{1mm} \exp_x(-w) = p_4, \\
     & v\cdot w = 0,\hspace{1mm} \Vert v \Vert = \Vert w \Vert.
 \end{align*}
 Note that $x$ lies on the line that goes through the origin and the center of square in $\mathbb{R}^3$. In particular, there are two points on the two sphere that satisfy the previous equations, but if we also require that $\Vert w \Vert =\Vert v\Vert \leq\frac{\pi}{2}$, then $x$ becomes unique unless the square lies on a great circle. This gives us a way to parameterize all the squares with a fixed side length as pairs of $(x,v)$ in $TS^2$ where $v$ has a fixed length. We can also use this observation to define tables for arbitrary closed Riemannian surfaces.
 \begin{defi}
     Let $(\Sigma,g)$ be a Riemannian surface and $f$ a continuous real valued function on $\Sigma$. We say $(x,v)$ in $T\Sigma$ is basis of a table for $f$ if 
     \begin{equation}
     \label{table eq}
         f(\exp(x,v)) = f(\exp(x,w)) = f(\exp(x,-v)) = f(\exp(x,-w)),
     \end{equation}
     where $w$ is a vector perpendicular to $v$ and has the same length as $v$.
 \end{defi}
 \begin{rem}
     There are two choices for $w$ in the previous definition, but our definition is independent of this choice.
 \end{rem}
 The following is a reformulation of the table problem for $S^2$.
 \begin{conj}[\textbf{The Table Problem for $S^2$}]
 \label{Table Conj}
     Fix a positive real number $a$. A continuous function on the two sphere endowed with the round metric admits a table with $\Vert v \Vert = a$. 
 \end{conj}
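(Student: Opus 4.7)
The plan is to adapt the topological/transversality machinery developed for Theorem \ref{Main Thm} to $S^2$. First, by compactness and density of $C^2$ positive functions, one reduces to the case where $f$ is a positive $C^2$ function on $S^2$ with the round metric. Exactly as in the outline after Theorem \ref{Main Thm}, one deforms the bundle of circles of radius $a$ in $TS^2$ via the map $(x,v)\mapsto (x,f(\exp_x(v))\cdot v)$ to obtain a subbundle whose fiber over each $x$ is a star-shaped $C^2$ Jordan curve in $T_xS^2$. Inside $TS^2$ one forms the subspace $C$ of centers of inscribed squares in this family and its subspace $C_0$ of centers of gracefully inscribed squares; a table corresponds to an intersection of $C$ with the zero section.

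Second, one would try to mimic the $\mathbb{Z}_2$-cycle argument of the main theorem: apply the Sard--Smale theorem to show that, for a generic $f$, $C_0$ is a closed $2$-dimensional $\mathbb{Z}_2$-submanifold of $TS^2$, and then compute its mod-$2$ intersection number with the zero section. This is precisely where the approach breaks down for $S^2$: since $\chi(S^2)=2$ is even, the mod-$2$ Euler class of $TS^2$ vanishes, and so the self-intersection of the zero section in $H_2(TS^2;\mathbb{Z}_2)$ is zero. Hence the argument that worked for odd $\chi$ gives no obstruction at all, which is exactly why Theorem \ref{Main Thm} falls short of proving the conjecture.

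To push past this, I would pursue one of two refinements. The first is to upgrade the $\mathbb{Z}_2$-intersection to an oriented $\mathbb{Z}$-intersection: one would need to choose coherent orientations on $C_0$ so that it represents a class in $H_2(TS^2;\mathbb{Z})\cong\mathbb{Z}$ whose pairing with the oriented zero section equals $\pm\chi(S^2)=\pm 2$ (or at least a non-zero integer). This demands a signed count of graceful squares inscribed in a generic star-shaped $C^2$ curve whose total is non-zero and independent of the curve — a significant strengthening of the Schwartz-type odd-count result cited in the paper. The second is to exploit the $SO(3)$-symmetry of the round metric together with the $\mathbb{Z}/4$-action on square configurations to run an equivariant version of the argument on a suitable cover (for instance the unit tangent bundle $ST^*S^2\cong\mathbb{RP}^3$), where the relevant equivariant Euler class may be non-zero even though the ordinary one vanishes.

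The main obstacle I expect is exactly the enumerative one: producing a signed or equivariant count of gracefully inscribed squares in a generic star-shaped curve that is guaranteed to be non-zero. Sard--Smale transversality and the bundle construction are essentially off-the-shelf once the framework of Theorem \ref{Main Thm} is in place, but establishing an orientation convention under which squares glide, merge, and disappear with cancelling signs in the expected way — or equivalently, identifying a $\mathbb{Z}/4$-equivariant refinement with non-trivial Euler-class pairing — seems to encode the full difficulty of the table problem, and is the step on which any attempt along these lines must stand or fall.
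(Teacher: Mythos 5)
The statement you were asked to prove is labeled a \emph{conjecture} in the paper (Conjecture~\ref{Table Conj}), and the paper does not prove it. Your write-up correctly recognizes this: you do not offer a proof, but instead explain precisely why the machinery of Theorem~\ref{Main Thm} stops short for $S^2$, namely that $\chi(S^2)=2$ is even, so the mod-$2$ self-intersection of the zero section in $TS^2$ vanishes and Proposition~\ref{Zero Section Intersection} yields no obstruction. That diagnosis is accurate, and your two suggested refinements (an oriented $\mathbb{Z}$-valued count of graceful squares, or an equivariant Euler class on a suitable cover or quotient) are sensible research directions, though neither is carried out and each faces exactly the enumerative obstruction you name.

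One connection worth making explicit: the paper \emph{does} settle Conjecture~\ref{Table Conj} in the special case of even functions with antipodally symmetric metrics (Corollary~\ref{Main Cor}), and the mechanism is a $\mathbb{Z}/2$-quotient rather than the $\mathbb{Z}/4$-action you propose. An even function on $S^2$ descends to $\mathbb{RP}^2$, which has $\chi=1$, so Theorem~\ref{Main Thm} applies directly and a table downstairs lifts to two tables upstairs. Your equivariant suggestion is in the same spirit but targets the square's cyclic symmetry rather than the sphere's antipodal symmetry; as far as the paper is concerned, only the latter is exploited, and the general conjecture remains open.
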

 Using the generalization given in equation \eqref{table eq}, one can ask a similar question for other Riemannian surfaces. In particular, we have the following problem.
 \begin{prob}
 \label{T-Prob}
     Let $(\Sigma, g)$ be a closed Riemannian surface and fix a positive real number $a$. Does every continuous function on $\Sigma$ admit a table defined by equation \eqref{table eq} and $\Vert v\Vert = a$?
 \end{prob}
 We note that our main Theorem answers this problem in affirmative for surfaces with odd Euler characteristic.
\begin{rem}
Since we work with compact surfaces and solving problem \ref{T-Prob} for a function $f$ is the same as solving it for $f+c$ where $c$ is a constant real number, we only need to solve the problem for positive functions. The other important point is that in contrast to peg problems for Jordan curves in the plane, this problem can be proved using a convergence argument because we fix the diameter of our table beforehand. Therefore, if one wants to prove problem \ref{T-Prob} for a surface $\Sigma$, it suffices to prove it for a dense subset of positive functions on $\Sigma$. 
\end{rem}
 
\section{A Submersion}
In this section, we will assume that $(\Sigma,g)$ is a fixed Riemannian surface.
Consider a positive real number $a$ and let
\begin{equation*}
    U_a(\Sigma) \coloneqq \{ (x,v) \in T\Sigma : \Vert v\Vert = a\}.
\end{equation*}
We are going to work with the fourth symmetric product of each fiber in $U_a(\Sigma)$; this space is a manifold itself, but we prefer to work with an open submanifold of it; see \cite{SymmS1} for more details. 

Let $X$ be a topological space and consider $sym^4(X)$; we define the fat diagonal $\Delta$ to be the subset of points in $sym^4(X)$ for which at least two of the coordinates are equal.
We define $K_a(\Sigma)$ to be a fiber bundle over $\Sigma$ where each fiber over a point $x$ is the fourth symmetric product of the circle with radius $a$ in $T_x\Sigma$ minus its fat diagonal. Note that the fibers are open non-orientable four manifolds. By abuse of notation, we let $sym^4(T\Sigma)$ denote the fiber bundle where each fiber over a point $x$ is $sym^4(T_x\Sigma)$; we cut out the fat diagonal from each fiber and denote the resulting fiber bundle by $Q(\Sigma)$. The fibers of $Q(\Sigma)$ are open orientable eight dimensional manifolds and $K_a(\Sigma)$ is a subbundle of $Q(\Sigma)$.

 Let $C^2(U_a(\Sigma))$ denote the space of $C^2$ functions on $U_a(\Sigma)$. This function space can be endowed with a norm that makes it a Banach space; see \cite{GlobalAnalysis} for more details. We will work with the open subset of positive functions in $C^2(U_a(\Sigma))$ and denote it by $C^2_+(U_a(\Sigma))$. 
\begin{defi}
    Define a map $\Psi\colon C_+^2(U_a(\Sigma))\times K_a(\Sigma) \to Q(\Sigma)$ by
    \begin{equation}
        (h,[\theta_1,\theta_2,\theta_3,\theta_4]) \mapsto [h(\theta_1)\cdot\theta_1,h(\theta_2)\cdot\theta_2,h(\theta_3)\cdot\theta_3,h(\theta_4)\cdot\theta_4].
    \end{equation}
    This map is well defined and the image avoids the diagonal in $sym^4(T\Sigma)$ because we are working with positive functions. In particular, this is a smooth map from a Banach manifold to a finite dimensional manifold.
\end{defi}
\begin{rem}
    Fix a positive function $h$ in $C^2_+(U_a\Sigma)$  and let $\Psi_h$ denote the restriction of $\Psi$ to $\{h\} \times K_a(\Sigma)\cong K_a(\Sigma)\to Q(\Sigma)$; this map covers the identity on $\Sigma$ and it is an embedding since we are only considering positive functions. Moreover, this map scales each fiber of $K_a(\Sigma)$ according to the function $h$; thus the image of $\Psi_h$ in each fiber of $Q(\Sigma)$ over a point $x$ is the fourth symmetric product of a star-shaped curve in $T_x\Sigma$ minus its fat diagonal. 
\end{rem}

\begin{lem}
    The map $\Psi$ is a submersion.
\end{lem}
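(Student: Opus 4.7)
The plan is to verify surjectivity of $d\Psi$ pointwise. Fix a point $(h, [\theta_1,\theta_2,\theta_3,\theta_4]) \in C^2_+(U_a(\Sigma)) \times K_a(\Sigma)$, denote the common base point by $x \in \Sigma$, and set $p_i \coloneqq h(\theta_i)\theta_i$. Since $\Psi$ respects the projections to $\Sigma$, any horizontal tangent vector at $[p_1,\ldots,p_4]\in Q(\Sigma)$ is already hit by moving the base point of $[\theta_1,\ldots,\theta_4]$ inside $K_a(\Sigma)$, so the task reduces to showing that the vertical differential surjects onto the vertical tangent space of $Q(\Sigma)$ at $[p_1,\ldots,p_4]$.

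Because the fat diagonal has been removed, the $S_4$-action on the ordered 4-tuples is free near $[\theta_1,\ldots,\theta_4]$ and $[p_1,\ldots,p_4]$, so I may lift to the ordered configuration space and identify the vertical tangent space at $[p_1,\ldots,p_4]$ with $\bigoplus_{i=1}^4 T_x\Sigma$. Given a target vertical vector $(u_1,u_2,u_3,u_4)$, I would produce it as a sum of two contributions. Varying only $h$ by $\eta \in C^2(U_a(\Sigma))$ yields the variation $(\eta(\theta_i)\theta_i)_i$; since the $\theta_i$ are four distinct points of the $C^2$ manifold $U_a(\Sigma)$, $C^2$ bump functions let me prescribe the scalars $\eta(\theta_i)$ independently, realizing every element of the $4$-dimensional subspace $\bigoplus_i \mathbb{R}\theta_i$.

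For the remaining four perpendicular directions I would vary each $\theta_i$ along the fiber circle $U_a(T_x\Sigma)$. Any such infinitesimal variation $\dot\theta_i$ is orthogonal to $\theta_i$, and differentiating the $i$-th slot of $\Psi$ gives
$$dh_{\theta_i}(\dot\theta_i)\,\theta_i + h(\theta_i)\,\dot\theta_i.$$
The second summand is a nonzero multiple (by $h(\theta_i)>0$) of $\dot\theta_i$, hence sweeps out the full line perpendicular to $\theta_i$; the first summand lies in $\mathbb{R}\theta_i$ and is absorbed by the $h$-step above. Adding the two contributions yields every vector in $\bigoplus_{i=1}^4 T_x\Sigma$, which together with the horizontal observation shows that $d\Psi$ is surjective, i.e., $\Psi$ is a submersion.

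I do not anticipate a serious technical obstacle; the only care needed is the legitimate passage between $\mathit{sym}^4$ and the ordered 4-tuples, which is valid precisely because the fat diagonal has been excised. If the $\theta_i$ were allowed to coincide then the independent bump-function prescription would collapse, which is the structural reason for removing $\Delta$ from $K_a(\Sigma)$ and $Q(\Sigma)$ in the first place.
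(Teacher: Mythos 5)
Your proposal is correct and follows essentially the same strategy as the paper: reduce to the vertical tangent space using the fact that $\Psi$ covers the identity on $\Sigma$, use variations of $h$ via bump functions to realize the radial directions $\bigoplus_i \mathbb{R}\theta_i$, and use variations of each $\theta_i$ along the circle fiber of $U_a(\Sigma)$ to realize the perpendicular directions (with the radial contribution $dh_{\theta_i}(\dot\theta_i)\theta_i$ absorbed by the first step). The only stylistic difference is that you make the passage from $sym^4$ to ordered tuples near a point off the fat diagonal explicit, which the paper leaves implicit.
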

\begin{proof}
    Consider a pair $(h,\theta)$ in $C_+^2(U_a(\Sigma))\times K_a(\Sigma)$ and let $\xi$ be its image under $\Psi$. Since $\Psi_h$ covers the identity map on $\Sigma$, if $\mathcal{H}_{\theta}$ is a horizontal subspace of $T_{\theta}K_a(\Sigma)$, then $d\Psi_h(\mathcal{H}_\theta)$ is also a horizontal subspace of $T_\xi Q(\Sigma)$. Therefore,
    we only need to check that $\Psi$ is a submersion when we restrict the map to a fiber over an arbitrary point $x$. Let $\theta_i$ denote the components of $\theta$. Fix all the $\theta_i's$ except $\theta_1$ and change $\theta_1$ along a curve $\delta(t)$ in $U_a(\Sigma)$ such that the curve $\gamma(t) = [\delta(t),\theta_2,\theta_3,\theta_4]$ avoids the diagonal in $sym^4(T\Sigma)$. We have
    \begin{equation*}
        \Psi(h,\gamma(t)) = [h(\delta(t))\delta(t),h(\theta_2)\theta_2,h(\theta_3)\theta_3,h(\theta_4)\theta_4].
    \end{equation*}
    Hence, we get 
    \begin{equation*}
        d\Psi(0,\Dot{\gamma}(0)) = [h(\theta_1)\cdot\Dot{\delta}(0) +\frac{d(h(\delta(t)))}{dt}\Big\vert_{t=0}\cdot\theta_1,0,0,0] \in \text{Im}(d\Psi),
    \end{equation*}
    where we used $\delta(0) = \theta_1$. Since $\delta(t)$ has constant length, $\Dot{\delta}(0)$ is a non-zero vector orthogonal to $\theta_1$. Repeating this argument for the other coordinates proves we have vectors of the previous form in the image of $d\Psi$ where all the components are zero except one and the non-zero component is of the form $r_i\cdot \omega_i+s_i\cdot\theta_i$ with $r_i = h(\theta_i)$ greater than zero, $\omega_i$ a vector orthogonal to $\theta_i$, and $s_i$ some arbitrary real number. Now fix the four-tuple $\theta$ and pick a function $g$ such that $g(\theta_1) = 1$ and $g$ vanishes on the other three coordinates. Let $g_t = h+tg$ and note that for $t$ small enough all the functions $g_t$ are positive. We get
    \begin{equation*}
        d\Psi(g,0) = [g(\theta_1)\cdot\theta_1,g(\theta_2)\cdot\theta_2,g(\theta_3)\cdot\theta_3,g(\theta_4)\cdot\theta_4] = [\theta_1,0,0,0]\in \text{Im}(d\Psi) .
    \end{equation*}
 The same argument shows we have vectors of the form $[0,\dots,\theta_i,\dots,0]$ in the image of $d\Psi$. We conclude the lemma because the set of vectors
 \begin{align*}
     &[0,\dots,\theta_i,\dots,0], \\ 
     & [0,\dots,\omega_i,\dots,0]
 \end{align*}
 generate all the vertical vectors over $x$ in $Q(\Sigma)$.
\end{proof}

    Let $A(\Sigma)$ denote the subbundle of $Q(\Sigma)$ where over each point $x$, $A(\Sigma)\big|_x$ is the set of four tuples of vectors that are vertices of a square in $T_x\Sigma$ with respect to the metric on $\Sigma$. This subbundle has codimension four in $Q(\Sigma)$.

\begin{cor}
    Let $\mathcal{S}$ denote $\Psi^{-1}(A(\Sigma))$. Then $\mathcal{S}$ is a codimension four smooth submanifold of $C_+^2(U_a(\Sigma))\times K_a(\Sigma)$.
\end{cor}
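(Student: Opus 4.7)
The plan is to invoke the preimage theorem for smooth maps between Banach manifolds: if $F\colon M\to N$ is a smooth map from a Banach manifold to a finite-dimensional manifold, and $F$ is transverse to a submanifold $A\subset N$, then $F^{-1}(A)$ is a smooth submanifold of $M$ whose codimension equals that of $A$ in $N$. Since the preceding lemma establishes that $\Psi$ is a submersion, it is automatically transverse to every submanifold of $Q(\Sigma)$, in particular to $A(\Sigma)$.

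The one input that is not quite on the shelf is that $A(\Sigma)$ is itself a smooth codimension-four submanifold of $Q(\Sigma)$. I would verify this fiberwise. For each $x\in\Sigma$ the fiber $Q(\Sigma)|_x=\mathrm{sym}^4(T_x\Sigma)\setminus\Delta$ is an eight-manifold, while the subset of unordered $4$-tuples forming the vertices of a Euclidean square (in the metric on $T_x\Sigma$ induced by $g$) is four-dimensional, since such a square is parametrized by its center (two parameters), its side length (one parameter), and its orientation angle (one parameter). Passing to the ordered cover $(T_x\Sigma)^4\setminus\Delta$, the square locus is cut out by four independent smooth equations, for instance that the two diagonals share a common midpoint, have equal length, and meet orthogonally. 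Since the $S_4$-action is free away from the fat diagonal, the quotient is a smooth codimension-four submanifold, and letting $x$ vary assembles these fibers into a smooth subbundle $A(\Sigma)\subset Q(\Sigma)$ of codimension four.

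With both ingredients in place the corollary is immediate. The splitting hypothesis needed to apply the preimage theorem in the Banach setting, namely that $\ker d\Psi$ is complemented in the tangent space of $C_+^2(U_a(\Sigma))\times K_a(\Sigma)$, is automatic here because the cokernel of $d\Psi$ is a quotient of the finite-dimensional space $T_{\Psi(h,\theta)}Q(\Sigma)$ and hence finite-dimensional. Therefore $\mathcal{S}=\Psi^{-1}(A(\Sigma))$ is a smooth codimension-four submanifold of $C_+^2(U_a(\Sigma))\times K_a(\Sigma)$. I do not anticipate a serious obstacle; the only mildly substantive step is the local description of $A(\Sigma)$, which is a short exercise in Euclidean geometry.
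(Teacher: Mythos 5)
Your proof takes essentially the same approach as the paper's one-line argument, which also reduces the claim to the facts that $\Psi$ is a submersion and that $A(\Sigma)$ is a codimension-four submanifold of $Q(\Sigma)$. The only difference is that you spell out the verification that $A(\Sigma)$ has codimension four (the paper simply asserts this just before the corollary) and explicitly note that the splitting hypothesis in the Banach preimage theorem is automatic here because $Q(\Sigma)$ is finite-dimensional.
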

\begin{proof}
    This follows from the fact that $\Psi$ is a submersion and $A(\Sigma)$ is a codimension $4$ submanifold.
\end{proof}
    We will call $\mathcal{S}$ the space of star-shaped squares since if we consider $\xi = \Psi(h,\theta)$ for a point $(h,\theta)$ in $\mathcal{S}$, then $\xi_i$'s are vertices of a square in the fiber of $T\Sigma$ over a point $x$ and the four points lie on a star-shaped curve around the origin in $T_x\Sigma$; this curve is defined by sending each $(x,v)$ in $U_a(\Sigma)$ to $(x,h(x,v)\cdot v)$ in $T_x\Sigma$.
\section{Star-Shaped Squares}
\begin{defi}
    We define a map $F\colon\mathcal{S}\to C_+^2(U_a(\Sigma))$ by restricting the first projection map on $C_+^2(U_a(\Sigma))\times K_a(\Sigma)$  to $\mathcal{S}$. This is a smooth map on $\mathcal{S}$ since $\mathcal{S}$ is a smooth submanifold of $C_+^2(U_a(\Sigma))\times K_a(\Sigma)$.
\end{defi}
\begin{rem}
    For simplicity, we will denote $C_+^2(U_a(\Sigma))\times K_a(\Sigma)$ by $\mathcal{N}$ and $C_+^2(U_a(\Sigma))$ by $\mathcal{F}$ in the following sections.
\end{rem}
\begin{lem}
\label{F is Fred}
    The map $F$ is Fredholm.
\end{lem}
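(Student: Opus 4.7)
The plan is to show that for each $(h,\theta)\in\mathcal{S}$, the differential $dF_{(h,\theta)}\colon T_{(h,\theta)}\mathcal{S}\to T_h\mathcal{F}$ is a Fredholm operator. The underlying observation is that $F$ is the restriction to a finite-codimension submanifold $\mathcal{S}\subset\mathcal{N}$ of the first projection $\pi_1\colon\mathcal{N}=\mathcal{F}\times K_a(\Sigma)\to\mathcal{F}$, and that $\pi_1$ already has finite-dimensional ``fiber direction'' $K_a(\Sigma)$. Fredholmness of $F$ should follow by a soft linear-algebra argument in tangent spaces.

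First, using the preceding submersion lemma, I would identify
\[
T_{(h,\theta)}\mathcal{S}\;=\;\ker\!\bigl(\pi_N\circ d\Psi_{(h,\theta)}\bigr)\;\subset\;T_h\mathcal{F}\times T_\theta K_a(\Sigma),
\]
where $\pi_N\colon T_\xi Q(\Sigma)\to T_\xi Q(\Sigma)/T_\xi A(\Sigma)$ is the projection onto the $4$-dimensional normal space at $\xi=\Psi(h,\theta)$. Since $dF$ is the restriction of the first projection, its kernel consists of the pairs $(0,\dot\theta)$ with $\dot\theta\in T_\theta K_a(\Sigma)$ satisfying $d\Psi(0,\dot\theta)\in T_\xi A(\Sigma)$, a linear subspace of the finite-dimensional space $T_\theta K_a(\Sigma)$. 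In particular $\dim\ker dF_{(h,\theta)}\leq\dim K_a(\Sigma)=6$.

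For the cokernel and closedness of the image, I would argue as follows. A vector $g\in T_h\mathcal{F}$ lies in $\operatorname{Im}(dF_{(h,\theta)})$ if and only if some $\dot\theta\in T_\theta K_a(\Sigma)$ satisfies $\pi_N(d\Psi(g,\dot\theta))=0$, equivalently $\pi_N(d\Psi(g,0))\in W$, where $W:=\pi_N\bigl(d\Psi(\{0\}\times T_\theta K_a(\Sigma))\bigr)$ is a subspace of the $4$-dimensional normal space. Thus $\operatorname{Im}(dF_{(h,\theta)})$ is the kernel of the continuous linear map
\[
g\;\longmapsto\;\bigl[\pi_N(d\Psi(g,0))\bigr]\;\in\;\bigl(T_\xi Q(\Sigma)/T_\xi A(\Sigma)\bigr)\big/W,
\]
whose target is finite-dimensional. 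Consequently $\operatorname{Im}(dF_{(h,\theta)})$ is automatically closed and has codimension at most $4$ in $T_h\mathcal{F}$.

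The proof is essentially bookkeeping once the submersion lemma is in hand, together with the elementary functional-analytic fact that the kernel of a continuous linear map into a finite-dimensional space is closed and of finite codimension, so I do not anticipate a real obstacle. As a by-product, a standard index count (codimension $4$ subspace inside a bundle whose ``vertical'' piece has dimension $\dim K_a(\Sigma)=6$) gives index $F = \dim K_a(\Sigma)-4 = 2$, a constant that will presumably feed into the subsequent Sard--Smale argument.
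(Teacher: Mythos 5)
Your proof is correct and takes essentially the same approach as the paper: both exploit that $F$ is the restriction of the first projection $pr_1$ on $\mathcal{N}$ to the codimension-$4$ submanifold $\mathcal{S}$, bounding $\ker dF$ by the $6$-dimensional fiber $T_\theta K_a(\Sigma)$ and $\operatorname{coker} dF$ by the codimension $4$. The only (welcome) difference is that you phrase the cokernel bound by exhibiting $\operatorname{Im}(dF)$ as the kernel of a continuous linear map into a finite-dimensional target, which makes closedness of the range explicit, whereas the paper instead produces a surjection $W/V \twoheadrightarrow \operatorname{coker}(dF)$ and leaves closedness to the standard fact about finite algebraic codimension.
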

\begin{proof}
  Fix a point $(h,\theta)$ in $\mathcal{S}$ and consider $dF\colon T_{(h,\theta)}\mathcal{S}\to T_h\mathcal{F}$. We need to show $\ker(dF)$ and $\text{coker}(dF)$ are finite dimensional. Let $pr_1$ denote the first projection map on $\mathcal{N}$ and note that $\ker(dpr_1)$ at $(h,\theta)$ is $T_\theta K_a(\Sigma)$ which has dimension $6$; we conclude that $\dim(\ker(dF))$ is finite. Let $W$ denote $T_{(h,\theta)}\mathcal{N}$ and $V$ denote $T_{(h,\theta)}\mathcal{S}$. We define a map $L\colon W/V\to \text{coker}(dF)$ by
  \begin{equation*}
      [w] \mapsto [dpr_1(w)].
  \end{equation*}
This map is surjective because $pr_1$ is a submersion and we know $\dim(W/V) = \text{codim}(\mathcal{S})=4$. Hence, $\dim(\text{coker}(dF))$ is finite.
\end{proof}
Our next step is to compute the index of $F$ and since index is constant on each connected component of $\mathcal{S}$, we need to compute the index on each connected component. Fortunately, we only need one of these connected components to prove our Theorem. 
\begin{defi}
    Let $\xi$ be a square inscribed in a curve $\gamma$. We say $\xi$ is graceful if we orient the curve $\gamma$ and consider the induced order on the vertices of $\xi$, this order agrees with the one induced from the circle inscribing $\xi$. 
    Consider $(h,\theta)$ in $\mathcal{S}$ and let $\xi = \Psi(h,\theta)$. We say $(h,\theta)$ is graceful if $\xi$ is a graceful square inscribed inside the corresponding curve to $h$ in $T\Sigma$ over $\pi(\theta)$, where $\pi \colon K_a(\Sigma)\to\Sigma$ is the bundle projection map.
\end{defi}
We denote the subset of graceful squares in $\mathcal{S}$ by $\mathcal{S}_0$. We will prove $\mathcal{S}_0$ is a connected component of $\mathcal{S}$. Then we find the index of $F$ over this component. We expect the index to be two because $\mathcal{S}$ has codimension four and $K_a(\Sigma)$ is six dimensional and indeed, this is what we will show. 

\begin{lem}
    \label{path between the points}
    Assume $(h_1,\theta_1)$ is in $\mathcal{S}_0$ and there is a path $\gamma$ in $\mathcal{S}$ from $(h_1,\theta_1)$ to $(h_2,\theta_2)$. Then $(h_2,\theta_2)$ is also graceful.
\end{lem}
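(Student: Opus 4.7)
The plan is to show that gracefulness is locally constant on $\mathcal{S}$, so that it is automatically constant along the given path $\gamma\colon[0,1]\to\mathcal{S}$ with $\gamma(0)=(h_1,\theta_1)$ and $\gamma(1)=(h_2,\theta_2)$. Let $T=\{t\in[0,1]:\gamma(t)\text{ is graceful}\}$; since $0\in T$, it suffices to prove that $T$ is both open and closed in $[0,1]$. Writing $\gamma(t)=(h(t),\theta(t))$ with components $\theta_i(t)$, I would unpack the definition of gracefulness as the equality of two cyclic orders on the four points $\xi_i(t)=h(t)(\theta_i(t))\cdot\theta_i(t)$: one coming from going around the square $\Psi(\gamma(t))\in A(\Sigma)$, the other from the star-shaped curve in $T_{\pi(\theta(t))}\Sigma$ parameterized by $v\mapsto h(t)(v)\cdot v$ as $v$ traverses $U_a|_{\pi(\theta(t))}$.

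The key point is that both cyclic orders are discrete invariants that depend continuously on $t$. First, because $\gamma(t)$ lies in $\mathcal{S}\subset\mathcal{F}\times K_a(\Sigma)$ and $K_a(\Sigma)$ is defined by removing the fat diagonal, the four tangent vectors $\theta_i(t)$ remain pairwise distinct for every $t$. Since $h(t)>0$, the map $v\mapsto h(t)(v)\cdot v$ is an embedding of the fiber circle onto a $C^2$ Jordan curve, and the four scaled vertices $\xi_i(t)$ stay pairwise distinct on it. The cyclic order of four distinct points on a parameterized embedded Jordan curve is locally constant under continuous deformation of both the curve and the points, and the cyclic order of the four vertices of a square (determined by which pairs are adjacent versus opposite) is likewise locally constant in the vertices. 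Hence the agreement of these two cyclic orders is locally constant in $t$, so $T$ is clopen and equal to all of $[0,1]$.

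The only point that requires a little care is orientation conventions, but this is a minor bookkeeping matter rather than a genuine obstacle. Gracefulness compares two cyclic orders, and reversing the orientation on the tangent plane reverses both simultaneously, so the condition is intrinsic and does not require $\Sigma$ to be orientable. Since the argument is entirely local along the compact image of $\gamma$, one can make a continuous choice of orientation on the tangent planes along $\pi\circ\theta([0,1])$ in a neighborhood of each $t$, which is all that is needed to phrase the local constancy argument rigorously and conclude that $(h_2,\theta_2)$ is graceful.
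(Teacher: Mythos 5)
Your proof is correct, and it takes a genuinely different route from the paper's. The paper argues by contradiction with an explicit geometric picture: gracefulness is characterized by the origin of $T_{\pi(\theta)}\Sigma$ avoiding the four ``corner'' regions cut out by the extended sides of the square, and if the origin were to enter such a region along the path it would have to cross one of the bounding lines, at which moment two vertices of the square would lie on the same ray from the origin --- impossible on a star-shaped curve. Your argument instead abstracts the relevant invariant to the cyclic (or dihedral) order of the four points, observes that both the order on the star-shaped curve (via the parameterization $v\mapsto h(t)(v)\cdot v$ of the circle fiber) and the order around the square are locally constant so long as the four points stay pairwise distinct --- which is guaranteed since we removed the fat diagonal and $h$ is positive --- and concludes by a standard clopen-in-$[0,1]$ argument. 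At bottom both proofs hinge on the same fact (a change of order would force two of the $\theta_i$ to collide, equivalently two vertices to sit on a single ray), but yours packages it as a general statement about local constancy of cyclic orders and runs directly rather than by contradiction, while the paper's is more concrete and perhaps easier to visualize. Your remark about orientation conventions is also a worthwhile point that the paper leaves implicit: since reversing the tangent-plane orientation reverses both cyclic orders simultaneously, gracefulness is an intrinsic, orientation-free condition, and a continuous local choice of orientation along the path (available because the pullback of $T\Sigma$ to $[0,1]$ is trivial) is all that the local-constancy argument needs, so non-orientability of $\Sigma$ causes no trouble.
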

\begin{proof}
    Since the square corresponding to $(h_1,\theta_1)$ is graceful, if we consider this square in the fiber of $T\Sigma$ over $\pi(\theta_1)$, the origin cannot lie in the regions $A, B, C,$ and $D$ determined by this square in figure $1$. 
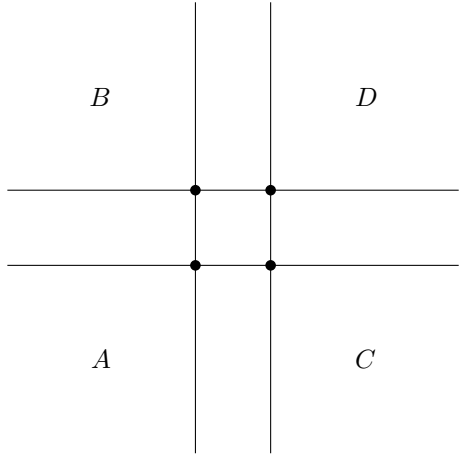
\begin{figure}[h]
\begin{center}
 \begin{tikzpicture}
    \draw (0,-2.5) -- (0,3.5);
    \draw (-2.5,1) -- (3.5,1);
    \draw (1,-2.5) -- (1,3.5);
    \draw (-2.5,0) -- (3.5,0);

    \fill (0,0) circle (2pt);
    \fill (0,1) circle (2pt);
    \fill (1,0) circle (2pt);
    \fill (1,1) circle (2pt);

    \node[below left] at (-1,-1) {$A$};
    \node[above left] at (-1,2) {$B$};
    \node[below right] at (2,-1) {$C$};
    \node[above right] at (2,2) {$D$};
 \end{tikzpicture}
\end{center}
  \caption{The four regions that cannot contain the origin}
  \label{fig:enter-label}
\end{figure}

     Suppose $\gamma(s) = (h_s,\theta_s)$ is a path in $\mathcal{S}$ starting from $(h_1, \theta_1)$ and ending at $(h_2, \theta_2)$. By contradiction, assume the square corresponding to $(h_2, \theta_2)$ is not graceful. Hence, the origin will enter one of the regions $A, B, C,$ or $D$. Without loss of generality, let this region be $A$. Then there is a time $s_0$ in between the two ends where the origin lies on the line $l$ given in figure $2$.
  \begin{figure}[h]
        \begin{center}
            \begin{tikzpicture}
            \draw (-2.5,0) -- (3.5,0);
            \draw (0,0) -- (0,1) -- (1,1) -- (1,0);
            \draw (0,-2.5) -- (0,0);
            \draw (1,-2.5) -- (1,0);

            \fill (0,0) circle (2pt);
            \fill (1,0) circle (2pt);
            \fill (-2,0) circle (2pt);

            \node[above] at (-2,0) {$o$};
            \node[right] at (3.5,0) {$l$};
            \node[below left] at (-1,-1) {$A$};
            \node[above left] at (0,0) {$v_1$};
            \node[above right] at (1,0) {$v_2$};
                
            \end{tikzpicture}
        \end{center}
        \caption{Line $l$ and origin $o$}
        \label{fig:enter-label}
\end{figure}
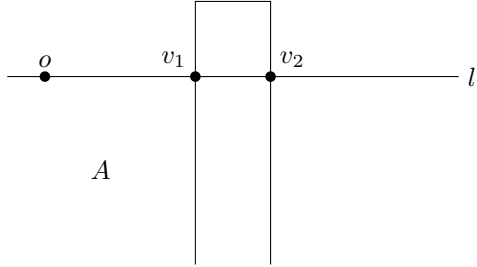
      Now the two vertices $v_1$ and $v_2$ of the square corresponding to $(h_{s_0},\theta_{s_0})$ lie on the same side of the origin in $l$ and this contradicts the fact that the curve inscribing this square is star-shaped (this follows from positivity of $h_{s_0}$).
\end{proof}

\begin{prop}
\label{S0 connectivity}
    The space $\mathcal{S}_0$ is connected. In particular, it is a connected component of $\mathcal{S}$; therefore, it is a Banach manifold of codimension four in $\mathcal{N}$.
\end{prop}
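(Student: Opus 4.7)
The plan begins by combining Lemma \ref{path between the points}, applied in both directions, with the local path-connectedness of the Banach manifold $\mathcal{S}$: the lemma forbids any path in $\mathcal{S}$ from joining a graceful point to a non-graceful one, so $\mathcal{S}_0$ and its complement are each unions of path-components, and such components are open. Consequently $\mathcal{S}_0$ is clopen in $\mathcal{S}$, so once it is shown to be path-connected it is a single connected component and automatically inherits from $\mathcal{S}$ the structure of a codimension-four Banach submanifold of $\mathcal{N}$. To establish path-connectedness, I would construct, for every $(h, \theta) \in \mathcal{S}_0$, a path in $\mathcal{S}_0$ to a chosen standard basepoint, in three phases.

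Phase I contracts the center of the inscribed square to the origin of the tangent plane. Writing the vertices of $\Psi(h, \theta)$ in $T_x \Sigma$ as $V_k = C + R_{k\pi/2}(r)$, with $C$ the center of the square and $r$ a radius vector, I interpolate $V_{k, t} = (1 - t) C + R_{k\pi/2}(r)$, so that $(V_{k, t})_k$ is a congruent square at each time $t \in [0,1]$. Gracefulness of the original square places the origin in the union of the two infinite strips spanned by the square, i.e., the complement of the four corner regions $A, B, C, D$ of Figure 1; an elementary affine-geometry check (reducing to the unit square) shows this ``good'' region is star-shaped with respect to the center of the square. Consequently, in the moving square's frame the origin traces a segment that remains inside the good region, and every intermediate square is graceful. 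Setting $\theta_{k, t} = a\, V_{k, t}/\Vert V_{k, t}\Vert$ after a small initial perturbation within $\mathcal{S}_0$ to guarantee that no vertex ever crosses the origin, I pick a continuous family of positive $C^2$ extensions $h_t \in \mathcal{F}$ with $h_0 = h$ and $h_t(\theta_{k, t}) = \Vert V_{k, t}\Vert/a$, using standard bump-function constructions (the $\theta_{k, t}$ remain distinct throughout). By construction $\Psi(h_t, [\theta_{0, t}, \ldots, \theta_{3, t}]) = [V_{0, t}, \ldots, V_{3, t}]$, giving a path in $\mathcal{S}_0$ that ends with a square centered at the origin.

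Phase II is the linear homotopy $h^{(2)}_s = (1 - s)h_1 + s$ from the endpoint of Phase I to the constant function $1$: since the $\theta_{k, 1}$ themselves form a square inscribed in the round circle of radius $a$ and $h_1$ takes the common value $\Vert r\Vert/a$ at all four vertices, $\Psi(h^{(2)}_s, [\theta_{0, 1}, \ldots, \theta_{3, 1}])$ is at every $s$ a uniform rescaling of that round square, hence a graceful square centered at the origin. Phase III moves within the round-circle stratum: the pairs $(1, \theta')$ with $\theta'$ inscribed in the round circle of radius $a$ over some point of $\Sigma$ form a bundle over the connected surface $\Sigma$ with $S^1$-fibers (the rotation angle modulo the $\mathbb{Z}/4$ symmetry of an unordered quadruple), so this stratum is connected and any two basepoints are joined by a path. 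The main technical obstacle is in Phase I --- verifying the star-shapedness of the good region with respect to the center of the square, and arranging the initial perturbation so that $V_{k, t} \mapsto a\, V_{k, t}/\Vert V_{k, t}\Vert$ stays well-defined throughout the interpolation.
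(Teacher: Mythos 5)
Your proof is correct, but it takes a genuinely different route from the paper. Both arguments exploit the same geometry (the origin must avoid the four corner regions of Figure~1) and both reduce to connecting an arbitrary $(g,\theta)\in\mathcal{S}_0$ to the ``round'' basepoints $(1,\delta_x)$, which form a connected family by parallel transport and rotation. The difference is in the intermediate path. The paper first keeps the square $\theta$ fixed and linearly deforms $g$ to a function $h$ parameterizing a large \emph{ellipse} through the four vertices that contains the origin (such an ellipse exists precisely because the origin lies in the good region); it then \emph{translates the ellipse} to be centered at the origin (each translate still contains the origin by convexity, and each ellipse inscribes a unique graceful square, so the square moves along automatically), and finally scales and rotates. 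You instead \emph{translate the square rigidly} toward the origin and manufacture a matching family of star-shaped curves on the fly via bump functions prescribing $h_t(\theta_{k,t}) = \Vert V_{k,t}\Vert/a$, then rescale to the round circle. What the paper's ellipse trick buys you is that the intermediate curves come for free and the inscribed square is canonically determined, so there is no need to build the functions $h_t$ by hand; you trade that for the (also elementary, but worth stating carefully) check that the plus-shaped good region is star-shaped with respect to the square's center, plus some bookkeeping in the bump-function construction (keeping $h_t$ positive and $C^2$, depending continuously on $t$, with the four $\theta_{k,t}$ uniformly separated). These are all fine, but they add moving parts. Your extra observation that no vertex can coincide with the origin, and that the $\theta_{k,t}$ stay distinct while the origin remains in the \emph{open} good region, is correct and in fact makes the ``small initial perturbation'' you hedge with unnecessary. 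Your front-loading of Lemma~\ref{path between the points} to get clopenness is also slightly more explicit than the paper, which instead verifies gracefulness directly along each constructed path.
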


\begin{proof}
    Consider a point $(g,\theta)$ in $\mathcal{S}_0$. This point corresponds to a 
graceful square that is inscribed in a star-shaped curve around the origin in some fiber $T_x\Sigma$. We can also consider the constant function $1$ and four vertices of a square $\delta_x$ on the circle with radius $a$ around the origin in $T_x\Sigma$. This gives us a point $(1,\delta_x)$ in $\mathcal{S}_0$ and all the points of this form in $\mathcal{S}_0$ can be connected to each other by parallel transport and rotation in their corresponding fibers. Hence, it suffices to prove there is a path between $(g,\theta)$ and $(1,\delta_x)$ in $\mathcal{S}_0$. 
    
    Since $(g, \theta)$ corresponds to a graceful square, the origin in $T_x\Sigma$ cannot lie in the four region defined in terms of this square given in figure $1$. 
    For any point outside of these four regions, we can find a sufficiently large ellipse going through the four vertices of the square such that the point is inside the ellipse. Consider such an ellipse for the origin; see the figure below.

    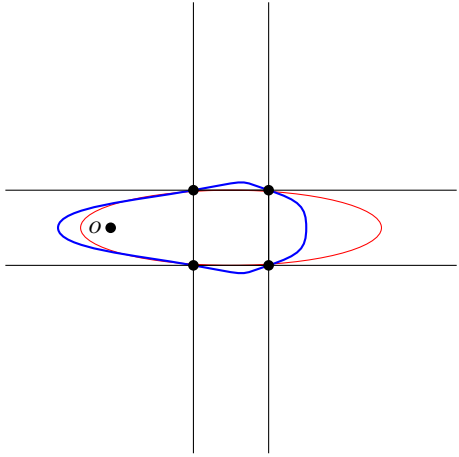
\begin{figure}[h]
    \centering
    \begin{tikzpicture}
        \coordinate (A) at (0,0);
        \coordinate (B) at (0,1);
        \coordinate (C) at (1,1);
        \coordinate (D) at (1,0);
        \coordinate (E) at (-1.8,0.5);
        \coordinate (F) at (1.5,0.5);
        
        \draw (0,0) -- (0,1) -- (1,1) -- (1,0) -- cycle;

        \draw[rotate=0, red] (0.5, 0.5) ellipse [x radius=2cm, y radius=0.5cm];
        \draw[blue, thick] plot[smooth cycle, tension=1] coordinates {(A) (E) (B) (C) (F) (D) };

        \draw (0,-2.5) -- (0,0);
        \draw (0,1) -- (0,3.5);
        \draw (-2.5,1) -- (3.5,1);
        \draw (-2.5,0) -- (3.5,0);
        \draw (1,-2.5) -- (1,3.5);

        \fill (0,0) circle (2pt);
        \fill (0,1) circle (2pt);
        \fill (1,0) circle (2pt);
        \fill (1,1) circle (2pt);
        \fill (-1.1,0.5) circle (2pt);

        \node[left] at (-1.1,0.5) {$o$};
    \end{tikzpicture}
    \caption{The blue curve is our star-shaped curve.}
    \label{fig:enter-label}
\end{figure}

      There is a positive function $h$ defined on the circle with radius $a$ around the origin so that the map
    \begin{equation*}
        \eta\mapsto h(\eta)\cdot\eta
    \end{equation*}
    takes the circle into the ellipse; extend this function to a positive function on $U_a(\Sigma)$ and note that $g$ restricted to the circle of radius $a$ in $T_x\Sigma$ will give us the corresponding function for our original curve in $T_x\Sigma$. 
    We define a path of positive functions by $h_t = th+(1-t)g$. Note that $h_t$ is constant for each $\theta_i$ in $\theta$ because we have fixed the square and only move the curves.
    Hence, $(h_t,\theta)$ gives a path from $(g,\theta)$ to $(h,\theta)$. Now we can translate the ellipse to an ellipse centered around the origin. This gives us a continuous path of functions $g_t$ and tuples of points $\theta_t$ with $g_0=h$ and $\theta_0 = \theta$ because each ellipse inscribes a unique square and all the ellipses in the translation contain the origin. Thus we get a path from $(g,\theta)$ to $(u,\sigma)$ where $\Psi(u,\sigma)$ is a square inscribed inside an ellipse centered around the origin in $T_x\Sigma$. Now we can take $(u,\sigma)$ to $(1,\delta_x)$ by first a homotopy fixing the four vertices of $(u,\sigma)$ and then scaling and rotation.  
  
\end{proof}

We will need the two following lemmas for computing the index and the transversality argument after that.

\begin{lem}
\label{The Ker Lemma}
    Fix a point $(h,\theta)$ in $\mathcal{S}$. The space $\Psi_h(K_a(\Sigma))$ is a six dimensional submanifold of $Q(\Sigma)$ and $A(\Sigma)$ is also a six dimensional submanifold of $Q(\Sigma)$. There is a one to one correspondence between the kernel of $dF\colon T_{(h,\theta)}\mathcal{S}\to T_h\mathcal{F}$ and $T_{\xi}\Psi_h(K_a(\Sigma))\cap T_{\xi}A(\Sigma)$ where $\xi = \Psi(h,\theta)$.
\end{lem}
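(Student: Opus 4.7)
The plan is to verify the dimension statements first and then set up the bijection via the restriction differential $d\Psi_h$.

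First I would handle the dimension counts. The bundle $K_a(\Sigma)$ has a two-dimensional base and four-dimensional fibers (since each fiber is $\mathrm{sym}^4(S^1_a)$ minus the fat diagonal, which is an open four-manifold), so $\dim K_a(\Sigma)=6$. The earlier remark established that $\Psi_h\colon K_a(\Sigma)\to Q(\Sigma)$ is an embedding (positivity of $h$ rules out collisions among the scaled entries), hence $\Psi_h(K_a(\Sigma))$ is a six-dimensional embedded submanifold of $Q(\Sigma)$. For $A(\Sigma)$, the bundle $Q(\Sigma)$ has total dimension $2+8=10$, and $A(\Sigma)\subset Q(\Sigma)$ has codimension four by construction, giving $\dim A(\Sigma)=6$.

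Next I would unwind the kernel of $dF$. Write $pr_1\colon\mathcal{N}\to\mathcal{F}$ for the first projection, so $F=pr_1|_{\mathcal{S}}$; then $\ker(dpr_1)_{(h,\theta)}=\{0\}\oplus T_\theta K_a(\Sigma)$. Because $\Psi$ is a submersion and $\mathcal{S}=\Psi^{-1}(A(\Sigma))$, the tangent space is
\begin{equation*}
T_{(h,\theta)}\mathcal{S}=(d\Psi)^{-1}\bigl(T_\xi A(\Sigma)\bigr),\qquad \xi=\Psi(h,\theta).
\end{equation*}
Therefore $\ker(dF)=T_{(h,\theta)}\mathcal{S}\cap\ker(dpr_1)$ is precisely the set of vectors of the form $(0,v)$ with $v\in T_\theta K_a(\Sigma)$ and $d\Psi(0,v)\in T_\xi A(\Sigma)$. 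Noting $d\Psi(0,v)=d\Psi_h(v)$, the kernel is identified with $\{v\in T_\theta K_a(\Sigma):d\Psi_h(v)\in T_\xi A(\Sigma)\}$.

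Finally I would exhibit the correspondence. Since $\Psi_h$ is an embedding, $d\Psi_h\colon T_\theta K_a(\Sigma)\to T_\xi\Psi_h(K_a(\Sigma))$ is a linear isomorphism onto. Under this isomorphism, the condition $d\Psi_h(v)\in T_\xi A(\Sigma)$ translates to $d\Psi_h(v)\in T_\xi\Psi_h(K_a(\Sigma))\cap T_\xi A(\Sigma)$. Thus the map $(0,v)\mapsto d\Psi_h(v)$ provides the required one-to-one correspondence between $\ker(dF)$ and $T_\xi\Psi_h(K_a(\Sigma))\cap T_\xi A(\Sigma)$.

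No serious obstacle is anticipated here; the main point to be careful about is ensuring that the submersion property of $\Psi$ justifies the equality $T_{(h,\theta)}\mathcal{S}=(d\Psi)^{-1}(T_\xi A(\Sigma))$, and that the embedding property of $\Psi_h$ is what makes $d\Psi_h$ injective so the correspondence is genuinely bijective rather than merely surjective.
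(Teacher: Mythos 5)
Your proposal is correct and follows essentially the same route as the paper: identify $\ker(dF)$ with pairs $(0,\eta)$, $\eta\in T_\theta K_a(\Sigma)$, satisfying $d\Psi_h(\eta)\in T_\xi A(\Sigma)$, then use the embedding property of $\Psi_h$ to transfer this to the intersection $T_\xi\Psi_h(K_a(\Sigma))\cap T_\xi A(\Sigma)$. You are slightly more explicit than the paper in justifying the dimension counts and in pointing out that injectivity of $d\Psi_h$ is what upgrades the correspondence from a surjection to a bijection, but the underlying argument is the same.
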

\begin{proof}
    Suppose $v$ is a vector in the kernel of $dF$ over $(h,\theta)$. Then we can write $v$ as a pair $(0,\eta)$ where $\eta$ is a vector in $T_\theta K_a(\Sigma)$ since $F$ is restriction of the first projection map to $\mathcal{S}$. Now since $v$ lies in the tangent space of $\mathcal{S}$ over $(h,\theta)$, we have
    \begin{equation*}
        d\Psi_{(h,\theta)}[(0,\eta)] = d\Psi_h[\eta] \in T_\xi A.
    \end{equation*}
A similar argument shows if a vector $\delta$ is in $T_{\xi}\Psi_h(K_a(\Sigma))\cap T_{\xi}A$, then $\delta = d\Psi_h[\eta]$ for some vector $\eta$ in $T_{\theta}K_a(\Sigma)$ and $(0,\eta)$ lies in the kernel of $dF$. 
\end{proof}
\begin{lem}
\label{The Coker Lemma}
    Let $(h,\theta)$ be a point in $\mathcal{S}$ and assume $\xi = \Psi(h,\theta)$ is a transverse intersection point of $\Psi_h(K_a(\Sigma))$ and $A(\Sigma)$. Then $dF\colon T_{(h,\theta)}\mathcal{S}\to T_h\mathcal{F}$ is surjective.
\end{lem}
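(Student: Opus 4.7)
The plan is to unpack what it means for a vector $k \in T_h \mathcal{F}$ to lie in the image of $dF$, and then use the transversality hypothesis as the exact tool needed to produce a compensating vertical vector.

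First I would observe that by definition of $\mathcal{S}$ as $\Psi^{-1}(A(\Sigma))$ and the fact that $\Psi$ is a submersion, the tangent space $T_{(h,\theta)}\mathcal{S}$ consists of those pairs $(k',\eta) \in T_h\mathcal{F} \times T_\theta K_a(\Sigma)$ for which $d\Psi_{(h,\theta)}(k',\eta) \in T_\xi A(\Sigma)$. Since $F$ is the restriction of the first projection, $dF(k',\eta) = k'$. Therefore surjectivity of $dF$ amounts to: for every $k \in T_h\mathcal{F}$, there exists $\eta \in T_\theta K_a(\Sigma)$ with
\begin{equation*}
d\Psi(k,0) + d\Psi_h(\eta) \;=\; d\Psi_{(h,\theta)}(k,\eta) \;\in\; T_\xi A(\Sigma).
\end{equation*}

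Next, I would invoke the transversality hypothesis. By assumption, $\Psi_h(K_a(\Sigma))$ and $A(\Sigma)$ meet transversally at $\xi$ inside $Q(\Sigma)$, which means
\begin{equation*}
T_\xi \Psi_h(K_a(\Sigma)) + T_\xi A(\Sigma) \;=\; T_\xi Q(\Sigma).
\end{equation*}
Let $v := d\Psi(k,0) \in T_\xi Q(\Sigma)$. By transversality, we may write $-v = w_1 + w_2$ with $w_1 \in T_\xi \Psi_h(K_a(\Sigma))$ and $w_2 \in T_\xi A(\Sigma)$. Since $\Psi_h$ is an embedding of $K_a(\Sigma)$, the vector $w_1$ has the form $w_1 = d\Psi_h(\eta)$ for some $\eta \in T_\theta K_a(\Sigma)$. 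Then
\begin{equation*}
d\Psi(k,0) + d\Psi_h(\eta) \;=\; v + w_1 \;=\; -w_2 \;\in\; T_\xi A(\Sigma),
\end{equation*}
so $(k,\eta) \in T_{(h,\theta)}\mathcal{S}$ and $dF(k,\eta) = k$, proving surjectivity.

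I don't expect a serious obstacle here; the real content is packaged in the fact that $\Psi$ is a submersion (so $d\Psi(k,0)$ makes sense as a well-defined element of $T_\xi Q(\Sigma)$) and in the transversality hypothesis, which is tailored precisely to solve this linear splitting problem. The only mild subtlety is keeping the bookkeeping straight between the ambient space $T_\xi Q(\Sigma)$ and the subspace $T_\xi A(\Sigma)$, and remembering that $d\Psi_h$ is the differential of $\Psi$ restricted to the fiber $\{h\} \times K_a(\Sigma)$, which coincides with $d\Psi(0,\cdot)$.
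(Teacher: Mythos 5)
Your proof is correct and follows essentially the same route as the paper: both decompose $d\Psi(k,0)$ (the paper decomposes $w$, you decompose $-v$, which differs only by sign) using the transversality hypothesis, pull the $K_a(\Sigma)$-component back to a vector $\eta$ in $T_\theta K_a(\Sigma)$ via the embedding $\Psi_h$, and check that the resulting pair lies in $T_{(h,\theta)}\mathcal{S}$ and projects to $k$. Your opening characterization of $T_{(h,\theta)}\mathcal{S}$ as $d\Psi^{-1}(T_\xi A(\Sigma))$ is implicit in the paper but spelling it out makes the verification cleaner.
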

\begin{proof}
    We can identify $T_h\mathcal{F}$ with the space of all $C^2$ functions on $U_a(\Sigma)$; let $g$ be an arbitrary function in $T_h\mathcal{F}$. Consider $v = (g,0)$ in $T_{(h,\theta)}\mathcal{N}$ and define $w = d\Psi_{(h,\theta)}[v]$ to be its image in $T_\xi Q_a(\Sigma)$. We can write $w$ as $w_A+w_K$ where $w_A$ is in $T_{\xi} A(\Sigma)$ and $w_K$ is in $T_{\xi}K_a(\Sigma)$ because we assumed $\xi$ is a transverse intersection point. There exists a vector $\eta$ in $T_{\theta}K_a(\Sigma)$ such that $w_K = d\Psi_h[\eta]$. Now consider the vector $\Tilde{v} = (g,-\eta)$ in $T_{(h,\theta)}\mathcal{N}$; we get
    \begin{equation*}
        d\Psi[\Tilde{v}] = d\Psi[v] + d\Psi[(0,-\eta)] = w- w_K = w_A \in T_{\xi}A(\Sigma).
    \end{equation*}
Hence, $\Tilde{v}$ lies in $T_{(h,\theta)}\mathcal{S}$ and we have $dF[\Tilde{v}] = g$.
\end{proof}

\begin{prop}
\label{F has index 2}
    The map $F$ has index $2$ over $\mathcal{S}_0$.
\end{prop}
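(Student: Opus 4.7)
My plan is to apply Lemmas \ref{The Ker Lemma} and \ref{The Coker Lemma} at a well-chosen point $(h_0, \theta_0) \in \mathcal{S}_0$ where $\Psi_{h_0}(K_a(\Sigma))$ meets $A(\Sigma)$ transversely at $\xi_0 := \Psi(h_0,\theta_0)$, and then to extend the computation to all of $\mathcal{S}_0$ via the local constancy of the Fredholm index. At such a transverse point, Lemma \ref{The Coker Lemma} gives $\mathrm{coker}(dF) = 0$, while Lemma \ref{The Ker Lemma} identifies $\ker(dF)$ with $T_{\xi_0}\Psi_{h_0}(K_a(\Sigma)) \cap T_{\xi_0} A(\Sigma)$. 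Since $\Psi_{h_0}$ is an embedding of the $6$-dimensional bundle $K_a(\Sigma)$ and $A(\Sigma)$ has codimension $4$ in the $10$-dimensional $Q(\Sigma)$, both submanifolds are $6$-dimensional, and their transverse intersection has dimension $6+6-10 = 2$. Hence $\dim \ker(dF) = 2$, giving index $2$ at $(h_0,\theta_0)$. Since $F$ is Fredholm by Lemma \ref{F is Fred} the index is locally constant, and since $\mathcal{S}_0$ is connected by Proposition \ref{S0 connectivity}, the index equals $2$ throughout $\mathcal{S}_0$.

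It remains to exhibit one such transverse point. I would pick $x \in \Sigma$ and construct $h_0 \in C^2_+(U_a(\Sigma))$ so that on the fiber over $x$ the associated star-shaped curve $\eta \mapsto h_0(x,\eta)\eta$ is an ellipse with distinct semi-axes slightly perturbed from the circle of radius $a$, interpolated via a bump function to the constant function $1$ outside a small normal neighborhood of $x$. By the reflection symmetries of the ellipse, it inscribes a unique square, namely the one whose diagonals lie along the principal axes; this square is graceful because the origin sits well inside the ellipse, which places the associated point $(h_0, \theta_0)$ in $\mathcal{S}_0$. Transversality of the fiberwise intersection $\Psi_{h_0}(K_a(\Sigma))\big|_x \cap A(\Sigma)\big|_x$ at this square reduces to a finite-dimensional Jacobian computation for an eccentric ellipse, which is standard. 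Because each of the two $6$-dimensional subbundles projects surjectively to $T_x\Sigma$ and their vertical parts already span the $8$-dimensional fiber of $Q(\Sigma)$, fiberwise transversality upgrades to transversality in the full $10$-dimensional $Q(\Sigma)$.

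The main obstacle I anticipate is the explicit fiberwise transversality verification; although it is elementary, the linear algebra encoding ``four perturbed points on $E$ form a square'' is somewhat intricate. A more conceptual route that bypasses the transverse-point argument altogether is to factor $F$ as $\mathcal{S} \xrightarrow{i} \mathcal{N} \xrightarrow{\mathrm{pr}_1} \mathcal{F}$: the inclusion $i$ has Fredholm differential of index $-\mathrm{codim}(\mathcal{S}) = -4$, while $\mathrm{pr}_1$ has Fredholm differential of index $\dim K_a(\Sigma) = 6$, so additivity of Fredholm indices under composition forces $\mathrm{index}(dF) = -4 + 6 = 2$ pointwise on $\mathcal{S}$, with no recourse to transversality or connectivity. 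Either route yields the desired conclusion.
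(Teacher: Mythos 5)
Your first route is essentially the paper's own proof: it, too, exhibits an eccentric-ellipse point where the fiberwise intersection with the space of squares is transverse, then invokes Lemmas \ref{The Ker Lemma} and \ref{The Coker Lemma} to read off $\dim\ker(dF)=2$, $\mathrm{coker}(dF)=0$, and finishes via the connectivity of $\mathcal{S}_0$ from Proposition \ref{S0 connectivity}. (One small inaccuracy: for the ellipse $v_1^2+2v_2^2=1$ the unique inscribed square has its \emph{sides} parallel to the principal axes, not its diagonals; this does not affect the argument.)

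Your second route is genuinely different and, I think, strictly cleaner. Writing $F=\mathrm{pr}_1\circ i$ and noting that at each point $di$ is the inclusion of a closed codimension-$4$ subspace (Fredholm of index $-4$) while $d\mathrm{pr}_1$ is the surjection with $6$-dimensional kernel $T_\theta K_a(\Sigma)$ (Fredholm of index $6$), the additivity of the Fredholm index under composition gives $\mathrm{ind}(dF)=2$ at every point of $\mathcal{S}$, with no need to construct a transverse model, no appeal to Lemmas \ref{The Ker Lemma} or \ref{The Coker Lemma}, and no recourse to the connectivity of $\mathcal{S}_0$. It is worth observing that the paper's own proof of Lemma \ref{F is Fred} already produces exactly this data ($\dim\ker(d\mathrm{pr}_1)=6$, $\dim W/V=4$) but only uses it to bound the kernel and cokernel of $dF$, not to compute the index; your observation extracts the index directly from that computation. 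This also proves the slightly stronger statement that the index is $2$ over all of $\mathcal{S}$, not merely over $\mathcal{S}_0$, and it renders Proposition \ref{S0 connectivity} unnecessary for the index computation (though still needed elsewhere).
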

\begin{proof}
     Let $x$ be a point on $\Sigma$. Assume $e_1,e_2$ are an orthonormal basis for $T\Sigma$ in a disk $D$ around $x$ and let $v_1$ and $v_2$ denote the corresponding coordinates for $T\Sigma$ above $D$. Define a local fiber bundle of ellipses around $x$ by 
    \begin{equation*}
        E\coloneqq\{(y,v_1,v_2)| v_1^2+2\cdot v_2^2 = 1\}.
    \end{equation*}
    There is a positive function $h$ defined on $U_a(S^2)$ above $D$ such that the map 
    \begin{equation*}
        \eta\mapsto h(\eta)\cdot\eta
    \end{equation*}
    takes $U_a(S^2)$ for each point in $D$ to the ellipse above that point. Extend $h$ to a positive function $\Tilde{h}$ in $\mathcal{F}$ and let $\theta$ be the four-tuple of points in $K_a(\Sigma)$ over $x$ that corresponds to the unique graceful square inscribed in $E_x$. Then $\xi = \Psi(\Tilde{h},\theta)$ is a transverse intersection point of $\Psi_{\Tilde{h}}(K_a(\Sigma))$ and $A(\Sigma)$ because around $x$ all the corresponding curves are ellipses and these meet the space of squares transversely; in fact, $\Psi_{\Tilde{h}}(K_a(\Sigma))\cap A(\Sigma)$ is locally a two dimensional disk around $\xi$. Therefore, by Lemma \ref{The Ker Lemma}, we know $\ker(dF)$ has dimension two at $(\Tilde{h},\theta)$ and by Lemma \ref{The Coker Lemma}, we know $dF$ is surjective at this point. We conclude that $F$ has index two since $\mathcal{S}_0$ is connected. 
\end{proof}
\begin{lem}
    The map $F$ restricted to $\mathcal{S}_0$ is surjective.
\end{lem}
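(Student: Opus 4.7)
Given any $h \in \mathcal{F}$, my plan is to produce some $\theta \in K_a(\Sigma)$ with $(h,\theta) \in \mathcal{S}_0$; equivalently, I seek a graceful square inscribed in the star-shaped curve cut out by $\Psi_h$ in some tangent plane. Fix any point $x \in \Sigma$. The image of the circle $U_a(\Sigma)|_x$ under $\Psi_h$ is the $C^2$ Jordan curve
\begin{equation*}
\gamma_x \;=\; \{\,h(x,v)\cdot v \;:\; v\in T_x\Sigma,\ \|v\|=a\,\}\;\subset\;T_x\Sigma,
\end{equation*}
which is star-shaped about the origin because $h$ is strictly positive. If $\gamma_x$ inscribes at least one graceful square, then the four vertices pull back along $v \mapsto h(x,v)\cdot v$ to a four-tuple $\theta$ on $U_a(\Sigma)|_x$, and $(h,\theta)$ lies in $\mathcal{S}_0$ with $F(h,\theta)=h$, proving surjectivity.

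The lemma therefore reduces to showing that every $C^2$ star-shaped Jordan curve admits at least one inscribed graceful square. This is the graceful version of the square peg theorem for $C^2$ star-shaped curves; the paper establishes it by the Fredholm/transversality framework developed in Sections 3 and 4 and reproved in the appendix, giving the $C^2$ analogue of Schwartz's theorem that every generic $C^2$ star-shaped Jordan curve inscribes an odd (in particular, positive) number of graceful squares. Passing from generic to arbitrary $C^2$ star-shaped curves is a standard compactness step: take a $C^2$-convergent sequence of generic curves, extract a convergent subsequence of graceful inscribed squares, and observe that the limit is non-degenerate (since the $C^2$ hypothesis rules out a square shrinking to a tangent-line point, as a straight line inscribes no square) and remains graceful (the cyclic order of the vertices on a star-shaped curve is determined by their polar angles from the origin, which depend continuously on the square and on the curve).

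The substantive obstacle is the generic odd-count theorem for $C^2$ star-shaped curves; granted that, the lemma follows immediately from the construction in the first paragraph. I do not anticipate difficulty with the compactness/limit step or with the pullback, since both are mechanical given that $h>0$ makes the parameterizing map $v\mapsto h(x,v)v$ a $C^2$ diffeomorphism between the fixed circle $U_a(\Sigma)|_x$ and $\gamma_x$.
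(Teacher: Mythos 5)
Your proof is correct and matches the paper's own argument: both reduce surjectivity of $F|_{\mathcal{S}_0}$ to the existence of a graceful square inscribed in the $C^2$ star-shaped curve over some (indeed any) point $x\in\Sigma$, which is exactly Theorem~\ref{Appendix Thm} of the appendix. One minor inaccuracy in your exposition: the appendix proves that existence theorem directly by a cobordism/contradiction argument (a generic path of curves from an ellipse to the given curve), not by the generic-plus-compactness limit you sketch, though your alternative route would also work given Corollary~\ref{OddnumGen} and the curvature bound the paper uses elsewhere for compactness.
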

\begin{proof}
    Consider a function $h$ in $\mathcal{F}$. We know $\Psi_h(K_a(\Sigma))\cap A(\Sigma)$ is non-empty and at least on of these intersection points corresponds to a graceful square since each of the star-shaped curves corresponding to $h$ in fibers of $T\Sigma$ inscribes at least one graceful square. We prove this Theorem in the appendix (Theorem \ref{Appendix Thm});
    see also \cite{SHN, Peg} for other versions of this result.
    Hence, there is a $\theta\in K_a(\Sigma)$ such that $(h,\theta)$ lies in $\mathcal{S}_0$.
\end{proof}
\begin{prop}
\label{transverse intersection}
    If $h$ is a regular value of $F$ restricted to $\mathcal{S}_0$, then $\Psi_h(K_a(\Sigma))$ and $A(\Sigma)$ intersect transversely in $Q(\Sigma)$ at graceful intersection points; in particular, the subset of graceful squares in $\Psi_h(K_a(\Sigma))\cap A(\Sigma)$ is a two dimensional manifold.
\end{prop}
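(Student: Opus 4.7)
The plan is to translate the regularity of $h$ for $F|_{\mathcal{S}_0}$ into a transversality statement for $\Psi_h(K_a(\Sigma))$ and $A(\Sigma)$ inside $Q(\Sigma)$, essentially by reversing the implication of Lemma~\ref{The Coker Lemma}. That lemma already showed that transverse intersection at $\xi$ implies $dF$ is surjective at $(h,\theta)$; I would establish the converse using exactly the same tangent-space decomposition, combined with the submersion property of $\Psi$ from Lemma above.

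More precisely, at each graceful intersection point $\xi = \Psi(h,\theta)$ with $(h,\theta) \in F^{-1}(h) \cap \mathcal{S}_0$, I would show that every $w \in T_\xi Q(\Sigma)$ lies in the sum $T_\xi\Psi_h(K_a(\Sigma)) + T_\xi A(\Sigma)$ as follows. Since $\Psi$ is a submersion, one can write $w = d\Psi_{(h,\theta)}(v_1, v_2)$ and split this as $d\Psi_{(h,\theta)}(v_1, 0) + d\Psi_h(v_2)$; the second summand is already tangent to $\Psi_h(K_a(\Sigma))$. For the first summand, regularity of $h$ supplies $\eta \in T_\theta K_a(\Sigma)$ with $(v_1,\eta) \in T_{(h,\theta)}\mathcal{S}_0$ and $dF(v_1,\eta) = v_1$, so that $d\Psi_{(h,\theta)}(v_1,\eta) \in T_\xi A(\Sigma)$ by definition of $\mathcal{S}$. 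Subtracting $d\Psi_h(\eta) \in T_\xi\Psi_h(K_a(\Sigma))$ then produces the required decomposition of $d\Psi_{(h,\theta)}(v_1,0)$, completing the transversality check.

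For the ``in particular'' clause I would invoke Proposition~\ref{F has index 2} together with the Sard--Smale implicit function theorem: since $F|_{\mathcal{S}_0}$ is Fredholm of index $2$ and $h$ is regular, $F^{-1}(h)\cap\mathcal{S}_0$ is a smooth $2$-manifold. Because $\Psi_h$ is an embedding of $K_a(\Sigma)$, the assignment $(h,\theta)\mapsto \Psi(h,\theta)$ identifies $F^{-1}(h)\cap\mathcal{S}_0$ diffeomorphically with the set of graceful intersection points of $\Psi_h(K_a(\Sigma))\cap A(\Sigma)$, giving the claimed $2$-dimensional manifold.

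I do not expect a serious obstacle. The dimension arithmetic $6+6-10=2$ matches the Fredholm index precisely, and both the tangent-space bookkeeping and the identification between the preimage and the locus of graceful intersection points are already encoded in Lemmas~\ref{The Ker Lemma} and~\ref{The Coker Lemma} and the remark after the definition of $\Psi$. The only care required is to work throughout on the component $\mathcal{S}_0$, so that the conclusion is drawn precisely at graceful intersection points and not at any other components of $\mathcal{S}$ where gracefulness might fail.
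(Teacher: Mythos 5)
Your proposal is correct, but it takes a different path through the same lemma infrastructure than the paper does. The paper's proof uses Lemma~\ref{The Ker Lemma}: since $h$ is regular and $F$ has index $2$, the kernel of $dF$ at $(h,\theta)$ is exactly $2$-dimensional, hence $T_\xi\Psi_h(K_a(\Sigma))\cap T_\xi A(\Sigma)$ is $2$-dimensional. Both subspaces are $6$-dimensional inside the $10$-dimensional $T_\xi Q(\Sigma)$, so $\dim\bigl(T_\xi\Psi_h(K_a(\Sigma))+T_\xi A(\Sigma)\bigr)=6+6-2=10$ and transversality follows by arithmetic. You instead verify transversality head-on: using the submersion property of $\Psi$ to write an arbitrary $w\in T_\xi Q(\Sigma)$ as $d\Psi(v_1,v_2)$, then exploiting surjectivity of $dF$ to lift $v_1$ into $T_{(h,\theta)}\mathcal{S}_0$, and from that producing the decomposition of $w$ into a vector tangent to $A(\Sigma)$ plus one tangent to $\Psi_h(K_a(\Sigma))$. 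This is, as you say, the converse of Lemma~\ref{The Coker Lemma}, which the paper states only in one direction. The two routes buy slightly different things: the paper's version is shorter and leans on the dimension bookkeeping already in place; yours is more structural, avoids the explicit dimension arithmetic entirely, and makes visible that regularity of $h$ directly reproduces the tangent-space decomposition that the Coker Lemma assumed. Both obtain the ``in particular'' clause cleanly---the paper from the transverse-intersection dimension count, you from the Fredholm implicit function theorem applied to $F|_{\mathcal{S}_0}$ and the observation that $\Psi_h$ identifies $F^{-1}(h)\cap\mathcal{S}_0$ with the locus of graceful intersection points.
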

\begin{proof}
    Consider $(h,\theta)$ in $\mathcal{S}_0$ and $\xi =\Psi(h,\theta)$ in $A(\Sigma)$. Since $h$ is a regular value of $F$, the kernel of $dF$ at $(h,\theta)$ is two dimensional and this proves $T_\xi\Psi_h(K_a(\Sigma))\cap T_\xi A(\Sigma)$ is two dimensional by Lemma \ref{The Ker Lemma}. Both $T_\xi\Psi_h(K_a(\Sigma))$ and $T_\xi A(\Sigma)$ are six dimensional subspaces of $T_{\xi}Q_a(\Sigma)$ which has dimension ten. We conclude the two subspaces meet transversely because their intersection has dimension two so the subset of graceful squares in $\Psi_h(K_a(\Sigma))\cap A(\Sigma)$ is a surface.
\end{proof}
\begin{cor}
\label{Density}
    There exists a dense subset of functions in $\mathcal{F}$ such that $\Psi_h(K_a(\Sigma))$ intersects $A(\Sigma)$ transversely at graceful squares for every function $h$ in this subset. 
\end{cor}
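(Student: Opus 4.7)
The plan is to apply the Sard--Smale theorem for Fredholm maps to the map $F$ restricted to the Banach submanifold $\mathcal{S}_0$ and then translate the resulting density of regular values into density of functions with the desired transversality.

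First I would assemble the ingredients we have already established. By Proposition \ref{S0 connectivity}, $\mathcal{S}_0$ is a Banach submanifold of $\mathcal{N}$; by Lemma \ref{F is Fred}, the restriction $F\colon\mathcal{S}_0\to\mathcal{F}$ is Fredholm; and by Proposition \ref{F has index 2} it has constant index $2$. Since the target $\mathcal{F}=C^2_+(U_a(\Sigma))$ is an open subset of a separable Banach space, it is second countable, so the hypotheses of Sard--Smale are in force. Invoking the theorem yields that the set $\mathcal{R}\subseteq\mathcal{F}$ of regular values of $F\big|_{\mathcal{S}_0}$ is residual (a countable intersection of open dense sets) in $\mathcal{F}$, and in particular dense.

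Next, for any $h\in\mathcal{R}$ I would apply Proposition \ref{transverse intersection}: that proposition shows precisely that whenever $h$ is a regular value of $F\big|_{\mathcal{S}_0}$, the subbundle $\Psi_h(K_a(\Sigma))$ meets $A(\Sigma)$ transversely in $Q(\Sigma)$ at every graceful intersection point, and the graceful locus of $\Psi_h(K_a(\Sigma))\cap A(\Sigma)$ is a two-dimensional manifold. Thus $\mathcal{R}$ itself is the dense subset required by the corollary.

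The argument is essentially a bookkeeping step gluing together the preceding results; the only point that needs a brief check is that $\mathcal{F}$ is separable/second countable so that residual subsets are dense, which is standard for $C^2$ function spaces on the compact manifold $U_a(\Sigma)$ (cf.\ \cite{GlobalAnalysis}). No further technical obstacle arises.
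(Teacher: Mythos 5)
Your proposal is correct and follows essentially the same route as the paper: apply the Sard--Smale theorem to the Fredholm map $F\big|_{\mathcal{S}_0}$ (using its established index, second countability of $\mathcal{F}$, and connectivity of $\mathcal{S}_0$) to get density of regular values, then invoke Proposition \ref{transverse intersection} to translate regularity into transversality at graceful squares. Your version just spells out the bookkeeping a bit more explicitly than the paper's terse proof.
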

\begin{proof}
    Consider the regular values of $F$; by Proposition \ref{transverse intersection}, we know the intersection is transverse at graceful squares for every element in this subset. The map $F$ is a surjective $C^{\infty}$ Fredholm map of index two between connected second countable Banach manifolds. Thus its regular values are dense in $\mathcal{F}$ by Sard-Smale Theorem; see \cite{Smale}
    for more details. 
\end{proof}
\section{Proof of the Main Theorem}
\begin{lem}
    Suppose $h$ is a regular value of $F$ restricted to $\mathcal{S}_0$. Then the subset of graceful squares in $\Psi_h(K_a(\Sigma))\cap A(\Sigma)$ is a compact surface.
\end{lem}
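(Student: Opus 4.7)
The plan is to leverage Proposition \ref{transverse intersection}, which already gives that the graceful intersection set is a smooth two-dimensional manifold; the remaining content of the lemma is therefore compactness. I will argue by sequential compactness, so the potential sources of non-compactness are (i) escape in the base $\Sigma$, which is ruled out by compactness of $\Sigma$, and (ii) escape within a fiber toward the fat diagonal $\Delta$, which will be the main obstacle.

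Given a sequence of graceful intersection points $\xi_n = \Psi_h(\theta_n)$ with base points $x_n = \pi(\theta_n) \in \Sigma$, I will first pass to a subsequence with $x_n \to x$ by compactness of $\Sigma$, then fix a local trivialization of $K_a(\Sigma)$ near $x$ (for instance via parallel transport of an orthonormal frame) to view the $\theta_n$ as a sequence in $sym^4$ of the circle of radius $a$ in $T_x\Sigma$. Since $sym^4$ of a compact space is compact, a further subsequence converges to a limit $\theta$; the only remaining point is to show $\theta \notin \Delta$. Once this is done, continuity of $\Psi$ gives $\Psi_h(\theta) \in A(\Sigma)$ so $(h, \theta) \in \mathcal{S}$, and gracefulness --- a discrete comparison of two cyclic orderings on a $4$-tuple with distinct vertices --- is preserved in the limit, so $(h, \theta) \in \mathcal{S}_0$ and the corresponding intersection point lies in the graceful subset.

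The heart of the argument is ruling out $\theta \in \Delta$. If two coordinates of $\theta$ coincide, then since a square in $\mathbb{R}^2$ with two equal vertices has zero side length (forcing all four vertices together), the four coordinates of $\theta$ must collapse to a single point $p_0$ on the star-shaped $C^2$ curve $\gamma$ obtained from $h$ in the fiber $T_x\Sigma$. I will then apply the $C^2$ Taylor expansion
\begin{equation*}
    \gamma(t) = p_0 + tT + \tfrac{t^2}{2}\gamma''(0) + o(t^2),
\end{equation*}
valid near $p_0$ with regular tangent $T \neq 0$ because $h > 0$ makes $\gamma$ an immersion. For four parameters $t_1,\dots,t_4 \to 0$ whose images form a square of side length $s \to 0$, the side-length condition forces $|t_i - t_j| = \Theta(s)$ between adjacent vertices of the square, while perpendicularity of two adjacent sides yields $(t_i - t_j)(t_k - t_l) = O(s^3)$ at leading order in the Taylor expansion (the cross terms involving the normal component contribute only $O(s^4)$, while $\langle T, N\rangle = 0$ kills the mixed tangent-normal contribution). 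Since each factor is $\Theta(s)$, the product is $\Theta(s^2)$, incompatible with the $O(s^3)$ bound as $s \to 0$. This contradicts $\theta \in \Delta$ and completes the argument; all other steps are routine compactness and continuity.
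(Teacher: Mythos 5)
Your argument is correct and takes essentially the same route as the paper: pass to a convergent subsequence in the compactified bundle, then rule out collapse to the fat diagonal using the $C^2$ regularity of $h$. Where the paper simply asserts that a uniform curvature bound on the star-shaped curves forces a positive lower bound on the side lengths of inscribed squares, you make this explicit via the Taylor expansion; the only small slip is that the perpendicularity relation for two adjacent sides shares a vertex, so it should read $(t_i-t_j)(t_k-t_j)$ rather than $(t_i-t_j)(t_k-t_l)$.
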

\begin{proof}
    Let $\Sigma_h$ denote the subset of graceful squares in $\Psi_h(K_a(\Sigma))\cap A(\Sigma)$ and note that this is a surface by proposition \ref{transverse intersection}. Now consider a sequence $a_n = \Psi_h(\theta_n)$ in $\Sigma_h$ and since $\theta_n$ is in $K_a(\Sigma)$, we can assume $\theta_n$ converges to a point $\theta$ in $sym^4(T\Sigma)$ after passing to a subsequence. The point $\theta$ is not in the fat diagonal of $sym^4(T\Sigma)$ because the function $h$ is $C^2$ and we can uniformly bound the curvature of all the star-shaped curves corresponding to $h$; thus there is a positive lower bound for the side length of all the squares $a_n$ and $\Psi_h(\theta)$ is a non-degenerate 
    square. The square $\Psi_h(\theta)$ is graceful since it is the limit of a sequence of graceful squares.
\end{proof}

\begin{rem}
    Note that for a function $h$ in regular values of $F\big|_{\mathcal{S}_0}$, the surface $\Sigma_h$ is not necessarily orientable because $K_a(\Sigma)$ is non-orientable.
\end{rem}
\begin{prop}
\label{Odd Degree}
    Assume $h$ is a regular value of $F\big|_{\mathcal{S}_0}$. Then $[\Sigma_h]$ is non-zero in $H_2(sym^4(T\Sigma);\mathbb{Z}_2)\cong \mathbb{Z}_2$.
\end{prop}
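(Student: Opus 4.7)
The plan is to exploit the fiber bundle structure of $sym^4(T\Sigma)$ over $\Sigma$ in order to reformulate $[\Sigma_h]\neq 0$ as an odd-degree statement, and then to extract the desired parity from the appendix theorem about graceful squares inscribed in a single star-shaped curve.

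First, I would identify $H_2(sym^4(T\Sigma);\mathbb{Z}_2)$. Each fiber of the projection $\pi\colon sym^4(T\Sigma)\to\Sigma$ is $sym^4(\mathbb{R}^2)$, which is homeomorphic to $\mathbb{C}^4\cong\mathbb{R}^8$ via the elementary symmetric polynomial identification $sym^4(\mathbb{C})\xrightarrow{\cong}\mathbb{C}^4$, hence contractible. Therefore $\pi$ is a homotopy equivalence, so
\[
\pi_*\colon H_2(sym^4(T\Sigma);\mathbb{Z}_2)\xrightarrow{\ \cong\ }H_2(\Sigma;\mathbb{Z}_2)\cong\mathbb{Z}_2.
\]
Thus $[\Sigma_h]\neq 0$ precisely when $\pi|_{\Sigma_h}\colon\Sigma_h\to\Sigma$ has odd mod $2$ degree; this degree is well defined because $\Sigma_h$ and $\Sigma$ are both closed $2$-manifolds (compactness is the preceding lemma, and smoothness without boundary comes from Proposition \ref{transverse intersection}).

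Next, I would compute the degree at a regular value. By Sard's theorem applied to $\pi|_{\Sigma_h}$, regular values are dense in $\Sigma$; fix such an $x_0$. Its preimage in $\Sigma_h$ is the finite set of graceful squares inscribed in the star-shaped $C^2$ curve
\[
\gamma_{h,x_0}:=\{\,h(x_0,v)\cdot v:v\in T_{x_0}\Sigma,\ \|v\|=a\,\}\subset T_{x_0}\Sigma.
\]
A horizontal/vertical decomposition of the tangent space of $Q(\Sigma)$ at an intersection point $\xi$ shows that $x_0$ is a regular value of $\pi|_{\Sigma_h}$ if and only if, inside the single fiber $\pi^{-1}(x_0)$, the fourth symmetric product of $\gamma_{h,x_0}$ (minus its fat diagonal) intersects the space of squares in $T_{x_0}\Sigma$ transversely at every graceful inscribed square. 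This is precisely the genericity hypothesis under which the appendix theorem (Theorem \ref{Appendix Thm}) guarantees that $\gamma_{h,x_0}$ inscribes an odd number of graceful squares, so the mod $2$ degree of $\pi|_{\Sigma_h}$ equals $1$ and $[\Sigma_h]\neq 0$.

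The main obstacle I anticipate is making the equivalence above rigorous: matching $x_0$-regularity of $\pi|_{\Sigma_h}$ with the fiberwise transversality that feeds the appendix theorem is an elementary but somewhat delicate tangent-space comparison in the bundle $Q(\Sigma)\to\Sigma$. Should that direct matching prove unreliable, the back-up plan is a cobordism argument: any two regular values $h_0,h_1$ of $F|_{\mathcal{S}_0}$ can be joined by a generic path $h_t$ along which the parameterized Sard-Smale theorem turns the preimage of this path in $\mathcal{S}_0$ into a compact $3$-manifold whose $\Psi$-image cobords $\Sigma_{h_0}$ with $\Sigma_{h_1}$ in $sym^4(T\Sigma)$, so $[\Sigma_h]$ depends only on the path component of $h$ among regular values. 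It would then suffice to compute the mod $2$ degree for one convenient $h$ modelled locally on the ellipse family of Proposition \ref{F has index 2}, where a unique graceful square is inscribed transversely over each point of a small disk.
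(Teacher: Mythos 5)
Your proposal is correct and follows essentially the same route as the paper: identify $\pi\colon sym^4(T\Sigma)\to\Sigma$ as a homotopy equivalence, reduce to showing $\pi|_{\Sigma_h}$ has odd mod $2$ degree, pick a regular value $x_0$, use the horizontal/vertical decomposition at points of $\pi^{-1}(x_0)\cap\Sigma_h$ to translate regularity of $x_0$ into fiberwise transversality of the curve $\gamma_{h,x_0}$ against the space of squares, and then invoke the appendix parity result for generic star-shaped curves. One small citation slip: the odd-number statement you need is Corollary \ref{OddnumGen}, not Theorem \ref{Appendix Thm} (which only gives existence of at least one graceful square).
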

\begin{proof}
    Let $\pi$ denote the bundle projection map from $sym^4(T\Sigma)$ to $\Sigma$; this is a deformation retract onto $\Sigma$. Consider the restriction of $\pi$ to $\Sigma_h$; we will show this map has non-zero mod $2$ degree. Suppose $x\in \Sigma$ is a regular value of this map. Then for every $\theta$ in $\pi^{-1}(x)\cap\Sigma_h$, $T_\theta\Sigma_h$ is a horizontal subspace of $T_{\theta} sym^4(T\Sigma)$ by assumption. In particular, $T_\theta \Psi_h(K_a(\Sigma))$ and $T_{\theta}A(\Sigma)$ have no vertical intersection and this proves the manifold of squares in $T_x\Sigma$ and fourth symmetric product of the star-shaped curve corresponding to $h$ above $x$ meet transversely at every $\theta$ in $\pi^{-1}(x)\cap \Sigma_h$. Therefore, this curve has finitely many graceful squares and the mod $2$ degree of $\pi$ is equal to the number of graceful squares inscribed inside this curve mod $2$. It is proved in the appendix (corollary \ref{OddnumGen}) that a generic star-shaped curve has an odd number of graceful squares; this was originally proved in \cite{SHN} for generic smooth curves and it was proved in \cite{Schwarz} for generic PL curves.
    The curve corresponding to $h$ above $x$ is a generic one because of the transversal intersection and we conclude the proposition. 
\end{proof}

    We define a map $c\colon sym^4(T\Sigma)\to T\Sigma$ by
    \begin{equation*}
        [\theta_1,\theta_2,\theta_3,\theta_4] \mapsto \frac{\sum\theta_i}{4}.
    \end{equation*}
    We call $c$ the center map; note that $c$ is a homotopy equivalence.

\begin{prop}
\label{Zero Section Intersection}
    Suppose $h$ is a regular value of $F\big|_{\mathcal{S}_0}$ and $\Sigma$ is a surface with odd Euler characteristic. Then $c$ vanishes at some point on the surface $\Sigma_h = \Psi(F^{-1}(h)\cap \mathcal{S}_0)$.
\end{prop}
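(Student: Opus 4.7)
The plan is to recast $c|_{\Sigma_h}$ as a section of a rank-two real vector bundle over $\Sigma_h$ and invoke the obstruction given by the mod $2$ Euler class (second Stiefel--Whitney class). Write $\iota:\Sigma_h\hookrightarrow sym^4(T\Sigma)$ for the inclusion, let $\pi:sym^4(T\Sigma)\to\Sigma$ denote the bundle projection, and set $p=\pi\circ\iota$. Since every element of $sym^4(T\Sigma)$ lying over a point $x\in\Sigma$ is a four-tuple of vectors in $T_x\Sigma$, the center $c$ takes values in the fiber above $p$; equivalently, $c\circ\iota$ is a section of the pulled-back bundle $p^{*}T\Sigma\to\Sigma_h$, and its zeros are exactly the points of $\Sigma_h$ on which $c$ vanishes.

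Suppose for contradiction that this section has no zero. Then $p^{*}T\Sigma$ admits a nowhere-vanishing section, hence splits as a trivial line bundle plus a line bundle, so by the Whitney sum formula
\[
w_2(p^{*}T\Sigma)=p^{*}w_2(T\Sigma)=0\in H^2(\Sigma_h;\mathbb{Z}_2).
\]
Pairing with the mod $2$ fundamental class of the closed surface $\Sigma_h$ (which is closed by the preceding lemma) and using naturality,
\[
0=\langle p^{*}w_2(T\Sigma),[\Sigma_h]\rangle=\langle w_2(T\Sigma),p_{*}[\Sigma_h]\rangle.
\]

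Next I would compute $p_{*}[\Sigma_h]$. The fibers of $\pi$ are the symmetric products $sym^4(\mathbb{R}^2)$, which are homeomorphic to $\mathbb{R}^8$ and therefore contractible, so $\pi$ is a homotopy equivalence and induces an isomorphism on $H_2(\,\cdot\,;\mathbb{Z}_2)$. By Proposition \ref{Odd Degree}, $\iota_{*}[\Sigma_h]$ is the non-zero class in $H_2(sym^4(T\Sigma);\mathbb{Z}_2)\cong\mathbb{Z}_2$, so $p_{*}[\Sigma_h]=[\Sigma]$, the generator of $H_2(\Sigma;\mathbb{Z}_2)$. Combined with the classical identity $\langle w_2(T\Sigma),[\Sigma]\rangle=\chi(\Sigma)\bmod 2$ and the hypothesis that $\chi(\Sigma)$ is odd, this gives $0=1$, the desired contradiction.

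The only delicate point is correctly identifying $c\circ\iota$ with a genuine section of $p^{*}T\Sigma$; once this is set up, the conclusion reduces to a single mod $2$ characteristic-class computation and requires no further transversality input beyond what is already supplied by Proposition \ref{Odd Degree}.
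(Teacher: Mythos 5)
Your proof is correct and takes essentially the same route as the paper's: both arguments reduce to the identity $\langle w_2(T\Sigma),[\Sigma]\rangle=\chi(\Sigma)\bmod 2$, with Proposition \ref{Odd Degree} supplying the nontriviality of the class carried by $\Sigma_h$. The paper phrases this as a mod $2$ intersection number of $c_*[\Sigma_h]$ with the zero section in $T\Sigma$, whereas you pull $T\Sigma$ back to $\Sigma_h$, view $c\circ\iota$ as a section of $p^*T\Sigma$, and read off the obstruction from $w_2$; these are the same computation related by naturality, and your version spells out more explicitly why a nowhere-vanishing $c|_{\Sigma_h}$ would force the Stiefel--Whitney number to be zero.
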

\begin{proof}
   By Proposition \ref{Odd Degree}, we know $[\Sigma_h]$ is a non-zero homology class. Hence, $c_*[\Sigma_h]$ is also a non-zero homology class in $T\Sigma$. The mod $2$ intersection number of such homology classes with the zero section in $T\Sigma$ is equal to the second Stiefel–Whitney number of $\Sigma$ and this is equal to $\chi(\Sigma)$ mod $2$; see \cite{CharClasses}
   for more details. We conclude that this intersection number is non-zero because we assumed $\Sigma$ has odd Euler characteristic. 
\end{proof}
\begin{rem}
   Proposition \ref{Zero Section Intersection} shows for every regular value $h$ of $F\big|_{\mathcal{S}_0}$, there is a point $x$ in $\Sigma$ so that the star shaped curve corresponding to $h$ above $x$ inscribes a graceful square centered around the origin in $T_x\Sigma$.
\end{rem}

\begin{lem}
\label{Table Lemma}
 Consider a positive function $h$ in regular values of $F\big|_{\mathcal{S}_0}$ and let 
 $$\Psi(h,\theta)=\xi=[\xi_1,\xi_2,\xi_3,\xi_4]$$
  be a four-tuple in $A(\Sigma)\cap \Psi_{h}(K_a(\Sigma))$ over a point $x$ in $\Sigma$. 
 Assume we have $\xi_1+\xi_2+\xi_3+\xi_4 = 0$. Then we must have
 \begin{equation}
     h(\theta_1) = h(\theta_2) = h(\theta_3) = h(\theta_4)
 \end{equation}
 and $\theta_i$'s are vertices of a square inscribed in $U_a\Sigma\big|_x$.
\end{lem}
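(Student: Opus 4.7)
The proof is essentially a rigidity argument pushing information backward from the square $\xi$ to the preimage $\theta$ via positivity of $h$ and the length constraint $\|\theta_i\| = a$.

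The plan is to start from the hypothesis that $\{\xi_1, \xi_2, \xi_3, \xi_4\}$ are the vertices of a square in $T_x\Sigma$ whose centroid $\tfrac{1}{4}\sum \xi_i$ is the origin. Since a square is centered at its centroid, opposite vertices are negatives of each other, so after relabeling the (unordered) indices we may write $\xi_3 = -\xi_1$, $\xi_4 = -\xi_2$ with $\xi_1 \perp \xi_2$ and $\|\xi_1\| = \|\xi_2\|$. By definition of $\Psi$ we have $\xi_i = h(\theta_i)\,\theta_i$, and $\theta_i \in U_a(\Sigma)|_x$ means $\|\theta_i\| = a$ for each $i$.

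Next I would exploit positivity of $h$ pair by pair. From $h(\theta_1)\theta_1 = -h(\theta_3)\theta_3$ and $h > 0$, the vectors $\theta_1$ and $\theta_3$ are antiparallel; since both have length $a$, this forces $\theta_3 = -\theta_1$ and then $h(\theta_1) = h(\theta_3)$. The same argument applied to $\xi_2, \xi_4$ gives $\theta_4 = -\theta_2$ and $h(\theta_2) = h(\theta_4)$. It remains to compare $h(\theta_1)$ and $h(\theta_2)$: the orthogonality $\xi_1 \perp \xi_2$ descends (again by positivity) to $\theta_1 \perp \theta_2$, and the equality of norms $\|\xi_1\| = \|\xi_2\|$ reads $h(\theta_1)\cdot a = h(\theta_2)\cdot a$, so $h(\theta_1) = h(\theta_2)$. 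Combined with the previous equalities, all four values of $h$ coincide.

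At this point the four vectors $\theta_1, \theta_2, \theta_3 = -\theta_1, \theta_4 = -\theta_2$ all have length $a$, with $\theta_1 \perp \theta_2$, so they are the vertices of a square inscribed in the circle of radius $a$ in $T_x\Sigma$, i.e.\ in $U_a(\Sigma)|_x$, as required. There is no real obstacle here beyond bookkeeping; the only subtle point is the initial relabeling step, where one must remember that we work in the symmetric product so choosing an indexing compatible with ``opposite vertices'' is legitimate. Everything else is direct linear algebra powered by the positivity of $h$.
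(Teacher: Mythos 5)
Your proof is correct and takes essentially the same route as the paper: reduce to $\xi_1=-\xi_3$, $\xi_2=-\xi_4$, $\xi_1\perp\xi_2$, use positivity of $h$ and $\|\theta_i\|=a$ to force $\theta_3=-\theta_1$, $\theta_4=-\theta_2$ and the pairwise equalities $h(\theta_1)=h(\theta_3)$, $h(\theta_2)=h(\theta_4)$, and then match the remaining pair. Your final step (reading $h(\theta_1)=h(\theta_2)$ directly off $\|\xi_1\|=\|\xi_2\|$) is a slightly more streamlined phrasing of the paper's closing observation that the scaled figure must again be a square, but it is the same fact.
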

\begin{proof}
After reordering the four-tuple $\xi$, we can assume 
\begin{equation*}
    \xi_1 = -\xi_3,\hspace{2mm} \xi_2 = -\xi_4,
\end{equation*}
and $\xi_1\cdot\xi_2 = 0$ since $\xi_i$'s are vertices of a square centered at the origin. Thus we get
\begin{equation*}
    \theta_1\cdot\theta_2 = \frac{\xi_1\cdot\xi_2}{h(\theta_1)h(\theta_2)} = 0.
\end{equation*}
Moreover, we have
\begin{equation*}
    h(\theta_1)\theta_1 = \xi_1 =-\xi_3 =  -h(\theta_3)\theta_3.
\end{equation*}
Hence, we can write
\begin{equation*}
    \theta_3 = \frac{-h(\theta_1)}{h(\theta_3)}\theta_1.
\end{equation*}
Since $\theta_1$ and $\theta_3$ have the same length, we deduce that $h(\theta_1)=h(\theta_3)$ by positivity of $h$ and $\theta_1 = -\theta_3$. A similar argument shows $h(\theta_2) = h(\theta_4)$ and $\theta_2 = -\theta_4$. Therefore, $\theta_i$ 's are vertices of a square on the circle with radius $a$ around the origin in $T_x\Sigma$; two vertices of this square are scaled by $h(\theta_1)$ and the other two by $h(\theta_2)$. Since the scaled shape is also a square by assumption, we conclude that $h(\theta_1) = h(\theta_2)$.
\end{proof}

\begin{defi}
    Assume $f$ is a positive function on $\Sigma$. We define a positive function $\Tilde{f}$ on $U_a(\Sigma)$ by
    \begin{equation*}
        (x,v) \mapsto f(\exp(x,v)).
    \end{equation*}
\end{defi}
\begin{proof}[\textbf{Proof of Theorem \ref{Main Thm}}]
    Since $\Sigma$ is compact, it suffices to prove the Theorem for positive functions. Let $f$ be a positive continuous function on $\Sigma$ and consider $\Tilde{f}$ on $U_a\Sigma$ for $a=\frac{d}{2}$. By Corollary \ref{Density}, we can find a sequence of functions $u_n$ in regular values of $F\big |_{\mathcal{S}_0}$ such that $u_n$ converges to $\Tilde{f}$ uniformly on $U_a\Sigma$. By Lemma \ref{Table Lemma} and Proposition \ref{Zero Section Intersection}, we know there is a sequence of graceful squares $\theta_n$ inscribed inside the fibers of $U_a\Sigma$ such that $u_n$ takes the same value on the four vertices of $\theta_n$ for every $n$. After passing to a subsequence, we can assume $\theta_n$ converges to $\theta$, a square with the same side length as $\theta_n$'s and inscribed inside the fiber of $U_a\Sigma$ over a point $x$ in $\Sigma$. All the vertices of $\theta$ take the same value under $\Tilde{f}$ by uniform convergence and the assumption on $u_n$'s. Hence, we conclude $f$ admits a table determined by the four vertices of $\theta$.
\end{proof}
\begin{proof}[\textbf{Proof of Corollary \ref{Main Cor}}]
    Let $f$ be an even function on $S^2$ and $g$ a Riemannian metric invariant under the antipodal map. This gives us a function $\Bar{f}$ and a Riemannian metric $\Bar{g}$ on $\mathbb{R}P^2$. Now apply Theorem \ref{Main Thm} to this Riemannian surface and the function $\Bar{f}$. The table for $\Bar{f}$ on $\mathbb{R}P^2$ lifts to two tables for $f$ on $S^2$.
\end{proof}

\appendix
 \begin{center}
     \Huge Appendix
 \end{center}   

\section{Square Peg for Star-Shaped Curves}

Our goal in this appendix is to prove the square peg problem for $C^2$ star-shaped curves (Theorem \ref{Appendix Thm}). We will also prove a generic star-shaped curve has an odd number of squares; this is corollary \ref{OddnumGen}. Furthermore, we will show that if we orient a generic star-shaped curve, then the curve inscribes an odd number of squares that are consistent with this orientation. The first version of this result was proved in \cite{SHN} for all smooth curves; see \cite{Matschke2011EquivariantTM} for a modern version of this proof. We will reprove this result for $C^2$ star-shaped curves using similar ideas to \cite{Matschke2011EquivariantTM} in combination with modern transversality arguments.   
\begin{defi}
    Let $\gamma$ be an oriented curve in the plane and suppose $Q$ is an inscribed square inside $\gamma$. We say $Q$ is graceful if $\gamma$ induces the same order on the vertices of $Q$ as the circle that inscribes this square.
\end{defi}
We will prove every star-shaped curve inscribes a graceful square. Let $h$ be a positive $C^2$ function on $S^1$; we can define a $C^2$ curve in the plane via the following.
\begin{equation*}
    \theta \mapsto h(\theta)\cdot\theta, \hspace{3mm} \forall \theta \in S^1.
\end{equation*}
Every $C^2$ star-shaped curve can be parametrized by a positive $C^2$ function on $S^1$ in this manner.
Let $\Delta_3$ denote the three dimensional simplex and consider its interior $\mathring{\Delta}_3$. Suppose $\gamma$ is a star-shaped curve in the plane and it is parametrized by a positive function $h$ on $S^1$. We can parametrize all the quadrilaterals inscribed in $\gamma$ with $S^1\times \mathring{\Delta}_3$ in the following way.
\begin{align*}
    [x,(t_0,t_1,t_2,t_3)] \mapsto &[f(x)\cdot x, f(e^{i\pi t_0}\cdot x)\cdot(e^{i\pi t_0}\cdot x), f(e^{i\pi (t_0+t_1)}x)\cdot (e^{i\pi (t_0+t_1)}\cdot x),\\  &f(e^{i\pi(t_0+t_1+t_2)}\cdot x)\cdot(e^{i\pi(t_0+t_1+t_2)}\cdot x)],
\end{align*}
where $x$ is a point in $S^1$ and $t$ is an interior point of $\Delta_3$. We will denote $S^1\times\mathring{\Delta}_3$ by $\Tilde{P}$ and the above equation gives us a map from $\Tilde{P}$ to $\mathbb{R}^8$. Every positive function $h$ on $S^1$ gives us such a map and we denote this map by $\varphi_h$ 

Consider all the four tuples $(x_1, x_2, x_3, x_4)$ in $(\mathbb{R}^2)^4\cong \mathbb{R}^8$ such that 
\begin{align*}
    \Vert x_1-x_2\Vert =& \Vert x_2-x_3\Vert = \Vert x_3-x_4\Vert = \Vert x_4-x_1\Vert, \\
    & \Vert x_1-x_3\Vert = \Vert x_2-x_4\Vert,
\end{align*}
and all the $x_i$'s are distinct; we denote this subset of $\mathbb{R}^8$ by $\Tilde{A}$. This space is a non-compact submanifold of dimension $4$ in $\mathbb{R}^8$. 
\begin{rem}
    For a fixed positive function $h$ on $S^1$, the set $\varphi_h(\Tilde{P})\cap \Tilde{A}$ corresponds to graceful squares inscribed inside the star-shaped curve parametrized by $h$. Every graceful square of this curve corresponds to four points in this intersection.
\end{rem}
\begin{rem}
    Note that $\varphi_h$ is an embedding of $\Tilde{P}$ into $\mathbb{R}^8$ and image of this map avoids the fat diagonal in $\mathbb{R}^8\cong (\mathbb{R}^2)^4$ for every positive function $h$. 
\end{rem}
There is a free action of $\mathbb{Z}_4$ on $\Tilde{P}$ generated by
\begin{equation*}
    [x,(t_0,t_1,t_2,t_3)] \mapsto [e^{i\pi t_0}\cdot x, (t_1, t_2, t_3, t_0)].
\end{equation*}
We denote this generator by $\varepsilon$. There is also a $\mathbb{Z}_4$ action on $\mathbb{R}^8\cong (\mathbb{R}^2)^4$ generated by
\begin{equation*}
    (x_1, x_2, x_3, x_4) \mapsto (x_2, x_3, x_4, x_1).
\end{equation*}
This action is free away from the fat diagonal in $\mathbb{R}^8$. Note that $\varphi_h$ is equivariant with respect to the cyclic actions on its range and domain. We quotient $\Tilde{P}$ by this action and denote the resulting space by $P$; we also quotient complement of the fat diagonal in $\mathbb{R}^8\cong(\mathbb{R}^2)^4$ by the cyclic action and denote the resulting space by $V$. Since $\varphi_h$ is equivariant, it descends to a map from $P$ to $V$ for every positive function $h$; by an abuse of notation, we also denote this map by $\varphi_h$. Let $A$ be the quotient of $\Tilde{A}$ in $V$. Now there is only one intersection point in $\varphi_h(P)\cap A$ corresponding to each graceful square inscribed in the star-shaped curve parametrized by $h$.

 Define a map $\Phi\colon C^2_+(S^1)\times P\to V$ by 
\begin{equation*}
    \Phi(h,(x,t)) = \varphi_h(x,t).
\end{equation*}
Note that $C^2_+(S^1)$ is an open subset of $C^2(S^1)$ and in particular, it is a Banach manifold. 
\begin{lem}
    The map $\Phi$ is a submersion.
\end{lem}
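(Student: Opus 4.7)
The plan is to mirror the earlier submersion argument for $\Psi$, adapted to the planar setup. Since $\Phi$ is the descent of an equivariant map under the free $\mathbb{Z}_4$-actions on $\widetilde P$ and on $\mathbb{R}^8\setminus\Delta_{\mathrm{fat}}$, it suffices to prove the same statement for the lift $\widetilde\Phi\colon C^2_+(S^1)\times\widetilde P\to\mathbb{R}^8\setminus\Delta_{\mathrm{fat}}$ given by the same formula. Fix $(h,(x,t))$ with $t=(t_0,t_1,t_2,t_3)\in\mathring\Delta_3$, and let $\theta_0=x$, $\theta_1=e^{i\pi t_0}x$, $\theta_2=e^{i\pi(t_0+t_1)}x$, $\theta_3=e^{i\pi(t_0+t_1+t_2)}x$ be the four distinct points of $S^1$ that it parametrizes. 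I would decompose $T\mathbb{R}^8$ into four $\mathbb{R}^2$ blocks, one per vertex, and split each block into its radial direction $\theta_k$ and its tangential direction $i\theta_k$; the goal is to show that $d\widetilde\Phi$ hits every block-radial and every block-tangential direction.

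The radial directions come from varying $h$ alone. Since the $\theta_k$ are distinct, for each $k$ I pick a $C^2$ bump $g_k$ on $S^1$ with $g_k(\theta_k)=1$ and $g_k(\theta_j)=0$ for $j\neq k$, so that $d\widetilde\Phi(g_k,0)$ is supported in the $k$-th block with value $\theta_k$. For the tangential directions I would use the four $\widetilde P$-variations $\partial_x,\partial_{t_0},\partial_{t_1},\partial_{t_2}$ (treating $(t_0,t_1,t_2)$ as free coordinates on $\mathring\Delta_3$). A direct calculation shows that $\partial_x$ rotates all four $\theta_k$ by the same infinitesimal angle, while $\partial_{t_j}$ rotates exactly $\theta_{j+1},\dots,\theta_3$. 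The resulting $4\times 4$ matrix of rotation coefficients is upper triangular with positive diagonal, hence invertible, so suitable linear combinations produce, for each $k$, a tangent vector at $(x,t)$ that rotates $\theta_k$ tangentially while leaving the other three vertices fixed. Under $d\widetilde\Phi$ such a vector maps into the $k$-th block with tangential component proportional to $h(\theta_k)>0$ plus a radial contamination from the chain rule on $h(\theta_k(s))$. That radial contamination is absorbed by the bumps $g_k$ from the previous step, so all four tangential directions are also realized.

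Combining the two families gives all eight block-radial and block-tangential directions, so $d\widetilde\Phi$ is surjective and $\Phi$ is a submersion. The main delicate piece is the chain-rule bookkeeping: differentiating $h(\theta_k(s))\theta_k(s)$ yields both a tangential term $h(\theta_k)\dot\theta_k$ and a radial term $dh(\theta_k)[\dot\theta_k]\theta_k$, and one must be careful that the radial term never obstructs the production of tangential motion. As in the proof for $\Psi$, this is exactly what the $h$-variations are there for, so no new difficulty appears.
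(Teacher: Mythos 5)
Your proposal is correct and follows essentially the same strategy as the paper: use variations of the parametrization to realize the tangential directions block by block, use bump-function variations of $h$ to realize the radial directions, and observe that the radial contamination from the chain rule can be cancelled by the latter. The only cosmetic difference is that the paper produces the single-vertex rotations by writing down an explicit curve $\gamma(s)=(e^{2\pi i s}x,\,t_0-s,\,t_1,\,t_2,\,t_3+s)$, whereas you recover them via invertibility of the upper-triangular matrix of rotation coefficients for $\partial_x,\partial_{t_0},\partial_{t_1},\partial_{t_2}$; these are two phrasings of the same computation.
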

\begin{proof}
    Consider a point $(h,(x,t))$ in $C^2_+(S^1)\times P$ and suppose $t=(t_0,t_1,t_2,t_3)$. Let $\delta$ be a positive number less than $t_0$. We define a curve $\gamma$ in $P$ given by
    \begin{equation*}
        \gamma(s) = (e^{2\pi i s}\cdot x, t_0-s, t_1, t_2, t_3+s)
    \end{equation*}
    for $s$ in $[0,\delta )$.
    This curve moves the first vertex of the quadrilateral corresponding to $(x,t)$ and fixes the other three. Now if we consider the curve $(h,\gamma(s))$ in $C^2_+(S^1)\times P$, we have
    \begin{equation*}
        \Phi(h,\gamma(s)) = (h(e^{2\pi i s}\cdot x)\cdot e^{2\pi i s}\cdot x, h(x_2)\cdot x_2, h(x_3)\cdot x_3, h(x_4)\cdot x_4)
    \end{equation*}
    where $x_2,x_3,$ and $x_4$ are the other three vertices corresponding to $(x,t)$. Hence, if we let $v$ denote the derivative of $(h,\gamma(s))$ at $s=0$, we get
    \begin{equation}
    \label{first Tan form}
        d\Phi_{(h,(x,t))}[v] = (h(x)\cdot2\pi i\cdot x+h'(x)\cdot x, 0, 0, 0),
    \end{equation}
    where $h(x)$ is a positive number by definition and $h'(x)$ is an arbitrary real number. Similarly, we can move the other vertices and get vectors of the form in equation \eqref{first Tan form} such that all the coordinates are zero except one of them and the non-zero coordinate is equal to $r\cdot i\cdot x + a\cdot x$ for a positive $r$ and an arbitrary number $a$. Now consider a $C^2$ function $g$ on $S^1$ so that we have
    $g(x) = 1$ and $g(x_2) = g(x_3) = g(x_4)=0$. For small real numbers $s$, all the functions $h+s\cdot g$ will be in $C^2_+(S^1)$ and we have
    \begin{equation}
        \label{Second Tan form}
        d\Phi_{(h,(x,t))}[(g,0)] = \frac{d}{ds}\Big|_0\Phi(h+s\cdot g, (x,t)) = (g(x)\cdot x, g(x_2)\cdot x_2, g(x_3)\cdot x_3, g(x_4)\cdot x_4) = (x, 0, 0, 0).
    \end{equation}
    We conclude the proof since all the vectors of the form given in equations \eqref{first Tan form} and \eqref{Second Tan form} generate $\mathbb{R}^8\cong T_{\Phi(h,(x,t))} V$.
\end{proof}

Now that we know $\Phi$ is a submersion, we conclude that $\Phi^{-1}(A)$ is a codimension $4$ submanifold of $C^2_+(S^1)\times P$. We denote this submanifold by $\mathcal{Q}$ and let $\pi \colon \mathcal{Q}\to C^2_+(S^1)$ be restriction of the first projection map to $\mathcal{Q}$.

\begin{lem}
    The space $\mathcal{Q}$ is connected and the map $\pi$ is Fredholm. Moreover, $\pi$ has index $0$.
\end{lem}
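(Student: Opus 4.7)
The plan is to separately establish Fredholmness of $\pi$, connectedness of $\mathcal{Q}$, and then to compute the index at one convenient point; since the Fredholm index is locally constant, connectedness will let me propagate that value to all of $\mathcal{Q}$.

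For Fredholmness I would mimic Lemma \ref{F is Fred}. Fix $(h,(x,t))\in\mathcal{Q}$. Since $\pi$ is the restriction of the first projection to $\mathcal{Q}$, $\ker(d\pi)$ embeds into $T_{(x,t)}P$, which has dimension $4$, so $\ker(d\pi)$ is finite-dimensional. For the cokernel, the map $[w]\mapsto [d\,\mathrm{pr}_1(w)]$ from the $4$-dimensional normal space $T_{(h,(x,t))}(\mathcal{F}\times P)/T_{(h,(x,t))}\mathcal{Q}$ to $\mathrm{coker}(d\pi)$ is surjective because $\mathrm{pr}_1$ is a submersion, so $\mathrm{coker}(d\pi)$ has dimension at most $4$.

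To establish connectedness of $\mathcal{Q}$ I would adapt the proof of Proposition \ref{S0 connectivity} to the planar setting. A point of $\mathcal{Q}$ consists of a star-shaped curve together with a graceful inscribed square. Gracefulness forces the center of the square to lie outside the four forbidden regions of Figure $1$, so I can choose an ellipse through the four vertices that contains the origin. Linearly interpolating the radial function from $h$ to the one whose graph is that ellipse (while keeping the angular parameters $(x,t)$ fixed) gives a path in $\mathcal{Q}$ to a pair (ellipse, graceful inscribed square). Then I translate the ellipse until it becomes centered at the origin, adjusting $(x,t)$ continuously (each translated ellipse inscribes a unique square), and finally deform the centered ellipse to the unit circle. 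Every intermediate curve is star-shaped and still inscribes a graceful square, so the path stays in $\mathcal{Q}$; on the unit circle any two inscribed squares differ by a rotation, so the endpoint can be taken to be a single chosen standard pair.

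Finally, for the index I would evaluate $d\pi$ at a pair $(h_E,(x_E,t_E))$, where $h_E$ parametrizes a non-circular ellipse centered at the origin and $(x_E,t_E)$ represents its unique graceful inscribed square. Exactly as in the proof of Proposition \ref{F has index 2}, $\varphi_{h_E}(P)$ meets $A$ transversely at $\xi=\Phi(h_E,(x_E,t_E))$; since both are $4$-dimensional submanifolds of the $8$-dimensional manifold $V$, transversality forces their intersection to be $0$-dimensional. The planar analogues of Lemmas \ref{The Ker Lemma} and \ref{The Coker Lemma} then give $\ker(d\pi)=0$ and $\mathrm{coker}(d\pi)=0$ at this point, so $\pi$ has index $0$ there and hence on all of $\mathcal{Q}$.

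The main obstacle is the connectedness step, specifically verifying carefully that the proposed path through ellipses stays entirely within $\mathcal{Q}$, i.e., that the inscribed square remains graceful and non-degenerate along the entire homotopy. This was the core technical content of Proposition \ref{S0 connectivity} in the main text, and the argument here is of the same flavor.
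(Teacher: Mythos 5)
Your proposal matches the paper's approach: the paper's own proof simply cites the same three earlier arguments you invoke (Fredholmness via Lemma \ref{F is Fred}, connectedness via Proposition \ref{S0 connectivity}, index computation at an ellipse via Proposition \ref{F has index 2}), and your dimension counts ($\ker = 0$, $\mathrm{coker} = 0$ at a transverse intersection of two $4$-manifolds in the $8$-manifold $V$) are correct. The only difference is that you spell out the details the paper leaves implicit, and correctly observe that the planar version of the connectedness argument is simpler since there is no base point to parallel-transport.
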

\begin{proof}
    The connectivity of $\mathcal{Q}$ follows the same way we proved $\mathcal{S}_0$ is connected in Proposition \ref{S0 connectivity}
    and $\pi$ being Fredholm follows from the same strategy in \ref{F is Fred}.
    For the index computation, take a function $g$ such that the curve parametrized by $g$ is an ellipse. Then we know $\varphi_g(P)$ intersect $A$ transversely and we can prove $\pi$ has index zero at this point using an argument similar to the one given in Proposition \ref{F has index 2}.
\end{proof}

\begin{defi}
    Let $h$ be a positive function on $S^1$. We say the star-shaped curve corresponding to $h$ is generic if $\varphi_h(P)$ intersects $A$ transversely. Note that a $C^2$ generic curve has finitely many graceful squares. 
\end{defi}

In the following, we call a positive function generic if its corresponding curve is generic. 

\begin{lem}
    A positive function $h$ is generic if and only if it is a regular value of $\pi$. 
\end{lem}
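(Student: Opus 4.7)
The plan is to repeat, in this simpler setting, the argument used in the main text to relate regular values of $F|_{\mathcal{S}_0}$ with transverse intersections (cf.\ Lemmas~\ref{The Ker Lemma}, \ref{The Coker Lemma} and Proposition~\ref{transverse intersection}). Fix $(h,(x,t))\in\mathcal{Q}$ and set $\xi=\varphi_h(x,t)=\Phi(h,(x,t))\in A$. The key preliminary step is to identify $\ker d\pi$ at $(h,(x,t))$ with $T_\xi\varphi_h(P)\cap T_\xi A$: a vector in $\ker d\pi$ has the form $(0,\eta)$ with $\eta\in T_{(x,t)}P$, and lying in $T\mathcal{Q}=T\Phi^{-1}(A)$ is equivalent to $d\varphi_h[\eta]\in T_\xi A$. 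Since $\varphi_h$ is an embedding, this sets up a canonical bijection.

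For the forward direction (generic $\Rightarrow$ regular value), given any $g\in T_h C^2_+(S^1)$, consider $w=d\Phi(g,0)\in T_\xi V$. Using transversality of $\varphi_h(P)$ and $A$ at $\xi$, decompose $w=w_A+w_P$ with $w_A\in T_\xi A$ and $w_P=d\varphi_h[\eta]$ for some $\eta\in T_{(x,t)}P$. Then $(g,-\eta)$ satisfies $d\Phi(g,-\eta)=w-w_P=w_A\in T_\xi A$, so it lies in $T\mathcal{Q}$ and maps under $d\pi$ to $g$; hence $d\pi$ is surjective at every preimage of $h$.

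For the reverse direction (regular value $\Rightarrow$ generic), the previous lemma tells us $\pi$ has index $0$, so surjectivity of $d\pi$ forces $\dim\ker d\pi=0$. By the kernel identification, $T_\xi\varphi_h(P)\cap T_\xi A=\{0\}$. A dimension count then finishes the job: $\dim\varphi_h(P)+\dim A=4+4=8=\dim V$, so trivial intersection implies $T_\xi\varphi_h(P)+T_\xi A=T_\xi V$, which is precisely transversality at $\xi$; as this holds at every point of $\pi^{-1}(h)$, the curve corresponding to $h$ is generic. The whole argument is essentially dimension bookkeeping, and the only place anything could go wrong is in the kernel identification; this step is immediate once one uses that $\mathcal{Q}=\Phi^{-1}(A)$ together with $\Phi$ being a submersion, so there is no genuine obstacle.
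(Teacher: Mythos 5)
Your proof is correct and follows exactly the route the paper indicates: it adapts Lemma~\ref{The Ker Lemma} (the kernel identification), Lemma~\ref{The Coker Lemma} (the forward direction), and the dimension count from Proposition~\ref{transverse intersection} (the reverse direction, using index $0$ in place of index $2$ and the dimension count $4+4=8=\dim V$ in place of $6+6-2=10$). The paper's own proof is a one-line pointer to those lemmas; you have simply written out the details it leaves to the reader.
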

\begin{proof}
    This can be proved using a similar argument as in Lemma \ref{The Ker Lemma} and \ref{The Coker Lemma}.
\end{proof}
\begin{cor}
    The set of generic functions are dense in $C^2_+(S^1)$.
\end{cor}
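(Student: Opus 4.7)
The plan is to apply the Sard--Smale theorem to the Fredholm map $\pi\colon\mathcal{Q}\to C^2_+(S^1)$ constructed just above, and then translate the conclusion about regular values back to a statement about generic star-shaped curves using the lemma immediately preceding the corollary.

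More concretely, I would proceed as follows. First I would verify the hypotheses of Sard--Smale: the target $C^2_+(S^1)$ is an open subset of the separable Banach space $C^2(S^1)$, hence a second countable Banach manifold; the total space $\mathcal{Q}=\Phi^{-1}(A)$ is likewise a (smooth, second countable) Banach manifold because it was shown to be a codimension-$4$ submanifold of the Banach manifold $C^2_+(S^1)\times P$ cut out by the submersion $\Phi$; and $\pi$ is a smooth Fredholm map of index $0$ by the previous lemma. Sard--Smale then guarantees that the set of regular values of $\pi$ is residual, and in particular dense, in $C^2_+(S^1)$. The preceding lemma identifies regular values of $\pi$ with generic functions, so density of the regular values is exactly density of the generic functions. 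This mirrors the argument already used in Corollary \ref{Density}.

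There is essentially no obstacle: the entire technical content (submersivity of $\Phi$, Fredholmness and index of $\pi$, and the regular-value $\Leftrightarrow$ genericity equivalence) has already been established in the preceding lemmas, so the proof is a one-line invocation of Sard--Smale followed by a reference to that equivalence. The only minor point worth mentioning explicitly is the separability of $C^2(S^1)$, which is needed so that Sard--Smale applies and residual sets are dense.
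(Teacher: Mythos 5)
Your argument is exactly the paper's: the paper's proof of this corollary is a one-line invocation of Sard--Smale, relying on the preceding lemmas (Fredholmness, index $0$, and the identification of regular values with generic functions), just as you spell out. Your version is merely more explicit about verifying the hypotheses, which is fine.
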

\begin{proof}
    This follows from Sard-Smale Theorem.
\end{proof}
\begin{thm}
\label{Appendix Thm}
    Every star-shaped $C^2$ curve inscribes at least one graceful square. 
\end{thm}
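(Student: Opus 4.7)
The plan is to prove surjectivity of $\pi\colon\mathcal{Q}\to C^2_+(S^1)$ by a mod-$2$ degree argument, and then promote the conclusion from generic positive functions to arbitrary $h\in C^2_+(S^1)$ by a compactness argument. This mirrors the strategy used for Theorem~\ref{Main Thm} in the body of the paper.

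First I would fix a base case. Take $g\in C^2_+(S^1)$ parametrizing a non-circular ellipse. Such a curve inscribes exactly one square up to the cyclic relabeling of vertices, and that square is graceful; after quotienting by the $\mathbb{Z}_4$ action on vertices one finds $\#\pi^{-1}(g)=1$. Moreover $\varphi_g(P)$ meets $A$ transversely at this point, as explained in the proof of Proposition~\ref{F has index 2}, so $g$ is a regular value of $\pi$ and the fiber has odd cardinality.

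Next I would show that the parity $\#\pi^{-1}(h) \bmod 2$ is the same for every regular value $h$. Because $\pi$ is a $C^\infty$ Fredholm map of index $0$ between connected Banach manifolds, along a generic path of functions in $C^2_+(S^1)$ connecting two regular values the cardinality of the fiber can change only in one of two ways: by a local birth--death of a pair of intersection points (which preserves parity), or by a graceful square escaping to the fat-diagonal boundary of $P$. To rule out the latter, I would invoke the same kind of uniform curvature estimate used throughout the paper: a $C^2$-bounded family of positive functions $h$ yields a uniform bound on the curvature of the star-shaped curves $\theta\mapsto h(\theta)\theta$, hence a uniform positive lower bound on the side length of any inscribed square, hence no vertex collisions in the limit. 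Together with the base case, this gives $\#\pi^{-1}(h)\equiv 1\pmod 2$ for every regular value $h$; in particular $\pi^{-1}(h)\neq\emptyset$.

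Finally, for an arbitrary $h_\infty\in C^2_+(S^1)$, I would choose a sequence $h_n\to h_\infty$ in $C^2_+(S^1)$ consisting of regular values of $\pi$ (density follows from Sard--Smale, as already noted in the excerpt). Each $h_n$ inscribes a graceful square, and the uniform curvature bound along the convergent sequence keeps these squares away from the fat diagonal. After extracting a subsequence, the squares converge to a non-degenerate quadrilateral inscribed in the curve for $h_\infty$; the square property is a closed condition, and gracefulness is closed once non-degeneracy is in hand, so the limit is a graceful inscribed square. The main obstacle is the uniform side-length estimate needed in the last two steps: it is what prevents the moduli of graceful squares from leaking into the fat diagonal either under homotopies of $h$ or in the final limit, and it rests on precisely the $C^2$ control used in the body of the paper; once it is in place the mod-$2$ degree and convergence arguments are routine.
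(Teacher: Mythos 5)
Your proof is correct and rests on the same core machinery as the paper's: the Fredholm index-$0$ map $\pi$, Sard--Smale density of regular values, the ellipse as a base case with a single transverse graceful square, and compactness of the relevant moduli via $C^2$ curvature bounds that keep inscribed squares away from the fat diagonal. The one structural difference is in how the conclusion is extended from regular values to an arbitrary $h\in C^2_+(S^1)$. You first establish that every regular value has odd (hence nonempty) fiber, and then approximate a general $h_\infty$ by regular values and pass to a limit. The paper avoids that final approximation step with a sharper observation: if $h$ has \emph{no} graceful square, then $\pi^{-1}(h)$ is empty and $h$ is therefore automatically a regular value of $\pi$, so a generic path from the ellipse $g$ to $h$ yields a compact one-manifold $\pi^{-1}(\{h_s\})$ with exactly one boundary point --- a direct contradiction with the classification of compact $1$-manifolds, no convergence argument required. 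Both routes are valid; the paper's is slightly more economical, and in fact your parity statement is exactly what the paper records separately as Corollary~\ref{OddnumGen}, which is the form actually used in the proof of Proposition~\ref{Odd Degree}. So your proposal proves a bit more than the theorem asks (the odd-count statement), at the cost of one extra limiting step that the paper's phrasing sidesteps.
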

\begin{proof}
    By contradiction, assume there is a star-shaped $C^2$ curve that does not admit a graceful square and let $h$ be the positive function that parametrizes this function. By assumption, $h$ is not in the image of $\pi$ so it is a regular value of $\pi$ by definition. Let $g$ be a positive function that parametrizes an ellipse; hence $g$ is in regular values of $\pi$ and $\pi^{-1}(g)$ is just a point corresponding to the unique square inscribed inside this ellipse. Consider a path of positive functions $h_s$ in $C^2_+(S^1)$ such that
    \begin{equation*}
        h_0 = g, \hspace{3mm} h_1 = h
    \end{equation*}
    and $\pi^{-1}(h_s)$ is a one manifold. We can find such a generic path because both $g$ and $h$ are regular values of $\pi$. The one manifold $\pi^{-1}(h_s)$ is compact since we can uniformly bound the total curvature of all the curves corresponding to $h_s$ for each $s$. This compact one manifold has only one boundary point corresponding to the square inscribed inside the ellipse which is a contradiction. 
\end{proof}
We get the following as a corollary of the cobordism argument given in the previous proof.
\begin{cor}
\label{OddnumGen}
    A generic star-shaped curve inscribes an odd number of graceful squares.
\end{cor}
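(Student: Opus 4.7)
The strategy is exactly the cobordism argument foreshadowed at the end of the proof of Theorem \ref{Appendix Thm}, now read as counting boundary points rather than deriving a contradiction. Fix a reference positive function $g$ whose associated star-shaped curve is an ellipse. Since the unique square inscribed in an ellipse is graceful and $\varphi_g(P)$ meets $A$ transversely at the corresponding point of $P$, the function $g$ is a regular value of $\pi$ and $\pi^{-1}(g)$ consists of exactly one point.

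Now let $h$ be a generic positive function, i.e.\ a regular value of $\pi$. By the density corollary (and the fact that the regular values of $\pi$ form a dense set by Sard--Smale), the pair $\{g,h\}$ can be joined by a path $\{h_s\}_{s \in [0,1]}$ in $C^2_+(S^1)$ with $h_0 = g$, $h_1 = h$, and such that the path, viewed as a map $[0,1]\to C^2_+(S^1)$, is transverse to $\pi$. Pulling back $\pi$ along this path produces a smooth 1-dimensional manifold $M \subset [0,1]\times P$ whose boundary lies over $\{0,1\}$; concretely,
\begin{equation*}
    \partial M = \bigl(\{0\}\times \pi^{-1}(g)\bigr) \sqcup \bigl(\{1\}\times \pi^{-1}(h)\bigr).
\end{equation*}

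The key step is to verify that $M$ is compact. This follows as in the proof of Theorem \ref{Appendix Thm}: along the path $h_s$ the total curvature and star-shaped constants of the corresponding curves are uniformly bounded, so any sequence in $M$ converges after passing to a subsequence to a configuration in $P$ (no collapse into the fat diagonal) and lying in $\mathcal{Q}$. Since $M$ is a compact 1-manifold with boundary, $|\partial M|$ is even. Combining with $|\pi^{-1}(g)|=1$, we conclude $|\pi^{-1}(h)|$ is odd, which is exactly the count of graceful squares inscribed in the curve parametrized by $h$.

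The main obstacle is the compactness assertion for $M$. For each individual $h_s$, the argument already used above rules out escape to the fat diagonal because the $C^2$ uniform bound on the parametrizing functions produces a positive lower bound on the side length of any inscribed graceful square; the only subtlety for the cobordism is that one needs this bound uniformly in $s$, which follows since $s \mapsto h_s$ lands in a compact arc of $C^2_+(S^1)$ and hence has uniformly bounded $C^2$-norm and uniform positivity. Once compactness of $M$ is in hand, the parity count is immediate.
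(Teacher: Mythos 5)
Your proposal is correct and follows exactly the paper's intended argument: the paper's proof is a one-line reference to the cobordism argument in the proof of Theorem \ref{Appendix Thm}, and you have unpacked precisely that argument (compact 1-manifold cobordism over a generic path from the ellipse function $g$, parity of boundary points, and the uniform $C^2$ bounds ensuring compactness). The level of detail you give on compactness of $M$ and transversality of the path is a faithful expansion of what the paper leaves implicit.
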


\bibliographystyle{amsplain}
\bibliography{Ref}

\begin{flushleft}
    Boston College. Massachusetts, USA.\\
    naserisa@bc.edu 
\end{flushleft}
\end{document}